\newtheorem{theorem}{Theorem}[section]
\newtheorem{lemma}[theorem]{Lemma}
\newtheorem{corollary}[theorem]{Corollary}
\theoremstyle{definition}
\newtheorem{example}[theorem]{Example}
\theoremstyle{remark}
\numberwithin{equation}{section}
\begin{document}
\setcounter{page}{1}

\title[Eigenvalue bounds]
{Bounds on the moduli of eigenvalues of rational matrices}
	
\author[Pallavi, Shrinath and Sachindranath]{Pallavi Basavaraju, Shrinath Hadimani 
and Sachindranath Jayaraman}
\address{School of Mathematics\\ 
Indian Institute of Science Education and Research Thiruvananthapuram\\ 
Maruthamala P.O., Vithura, Thiruvananthapuram -- 695 551, Kerala, India.}
\email{(pallavipoorna20, srinathsh3320, sachindranathj)@iisertvm.ac.in, 
sachindranathj@gmail.com}

\subjclass[2010]{15A18, 15A42, 47A12, 47A56, 26C15}
	
\keywords{Rational eigenvalue problems; matrix polynomials; rational matrices; 
bounds on the moduli of eigenvalues; numerical radius}
	
\begin{abstract} 
A rational matrix is a matrix-valued function 
$R(\lambda): \mathbb{C} \rightarrow M_p$ such that $R(\lambda) = \begin{bmatrix}
r_{ij}(\lambda)
\end{bmatrix} _{p\times p}$, where $r_{ij}(\lambda)$ are scalar complex rational 
functions in $\lambda$ for $i,j=1,2,\ldots,p$. The aim of this paper is to 
obtain bounds on the moduli of eigenvalues of rational matrices in terms of 
the moduli of their poles. To a given rational matrix $R(\lambda)$ we associate a 
block matrix $\mathcal{C}_R$ whose blocks consist of the coefficient matrices of 
$R(\lambda)$, as well as a scalar real rational function $q(x)$ whose coefficients 
consist of the norm of the coefficient matrices of $R(\lambda)$. We prove 
that a zero of $q(x)$ which is greater than the moduli 
of all the poles of $R(\lambda)$ will be an upper bound on the moduli of 
eigenvalues of $R(\lambda)$. Moreover, by using a block matrix associated with 
$q(x)$, we establish bounds on the zeros of $q(x)$, which in turn yields  
bounds on the moduli of eigenvalues of $R(\lambda)$. 
\end{abstract}
	
\maketitle
	
\section{Introduction}\label{sec-1}

The general expression for a nonlinear matrix-valued function 
is $G(\lambda) \colon \mathbb{C} \to M_p$ such that $G(\lambda) = \displaystyle 
\sum_{i=0}^{m} A_i f_i(\lambda)$, where $M_p$ is the set of all $p \times p$ complex 
matrices, $f_i \colon \mathbb{C} \to \mathbb{C}$ are nonlinear functions of 
$\lambda$, and $A_i \in M_p$ for $i=0,1,2,\ldots,m$. A nonlinear eigenvalue 
problem, abbreviated as NEP, is to find a scalar $\lambda_0 \in \mathbb{C}$ 
and a nonzero vector $v \in \mathbb{C}^n$ such that $G(\lambda_0)v = 0$. The 
scalar $\lambda_0$ is called an eigenvalue of $G(\lambda)$ and the nonzero 
vector $v$ is called an eigenvector corresponding to the eigenvalue $\lambda_0$. 
In particular, if $f_i(\lambda) = \lambda^i$ for $i=0,1,2,\ldots,m$, then 
$G(\lambda)$ is called a $p \times p$ matrix polynomial of degree $m$ and is 
denoted by $P(\lambda)$. 
In this case the corresponding NEP is called a polynomial eigenvalue 
problem (PEP). When the $f_i(\lambda)$ are scalar complex rational functions for 
$i=0,1,2,\ldots,m$, $G(\lambda)$ is called a $p\times p$ rational matrix and 
denoted by $R(\lambda)$. The corresponding eigenvalue problem is called a rational 
eigenvalue problem (REP). Note that rational matrices are the matrices whose 
entries are scalar rational functions. In other words, a $p\times p$ rational 
matrix is a matrix-valued function $R(\lambda)\colon \mathbb{C} \to M_p$ such 
that $R(\lambda) = \begin{bmatrix}
r_{ij}(\lambda)
\end{bmatrix} _{p\times p}$, where $r_{ij}(\lambda)$ are scalar complex rational 
functions in $\lambda$ for $i,j=1,2,\ldots,p$. Rational matrices are sometimes 
called matrix rational functions (see for instance \cite{Su-Bai}). Throughout 
this manuscript we represent an arbitrary nonlinear matrix-valued function by 
$G(\lambda)$, a rational matrix by $R(\lambda)$ and a matrix polynomial by 
$P(\lambda)$. The following are examples of NEPs which can be found in 
\cite{Saad-Guide-Meidlar}, \cite{Su-Bai} 
and \cite{Betcke-Higham-Tisseur} respectively.
\begin{itemize}
\item[(i)] $G(\lambda)v= 0$, where $G(\lambda)$ is the nonlinear matrix-valued 
function given by $ G(\lambda) = -B_0 + I \lambda + A_1 e ^ {-\lambda \tau}$, 
where $B_0 = \begin{bmatrix}
-5 & 1 \\
2 & -6
\end{bmatrix}, A_1 = -\begin{bmatrix}
-2 & 1 \\
4 & -1
\end{bmatrix}$ and $\tau = 1$.
\item[(ii)] $R(\lambda)v=0$, where $R(\lambda)$ is the rational matrix given by 
$R(\lambda) = A-B\lambda + \displaystyle \sum_{i = 1}^{m} E_i \frac{\lambda}{\lambda-\sigma_i}$, 
where $\sigma_i$ are positive, $A $ and $B$ are symmetric positive definite 
matrices, and $E_i= C_iC_i^T$, $C_i \in \mathbb{R}^{n\times r}$ has rank $r_i$ 
for $ i=1,2,\ldots,m$.
\item[(iii)] $P(\lambda)v=0$, where $P(\lambda)$ is the matrix polynomial given 
by $P(\lambda) = I \lambda^2 +B \lambda +C$, where $B = \begin{bmatrix}
0 & 1+\alpha \\
1 & 0
\end{bmatrix}$, $C = \begin{bmatrix}
1/2 & 0\\
0 & 1/4
\end{bmatrix}$ and $\alpha \geq 0$.
\end{itemize} 
NEPs emerge in a variety of applications. To name a few, they emerge in the description of eigenvibration of a string with a load of mass attached by an elastic string 
in \cite{Betcke-Higham-Tisseur} and in optimization of acoustic emissions of high 
speed trains in \cite{Mackey}. NEPs also arise in application to electronic structure of calculations of quantum 
dots in \cite{Heinrich} and in application to photonic crystals in 
\cite{Engstrom-Langer-Tretter}. The origins of the study of NEPs lie in linear systems and control theory with the investigation of structural properties such as the location of poles and eigenvalues, see \cite{Kailath}. 
A comprehensive treatment of nonlinear eigenvalue problems can be found in 
\cite{Dopico}, \cite{Mehrmann-Voss} 
and the references cited therein. Many nonlinear eigenvalue problems encountered 
in practical applications are either rational matrices or can be approximated by 
a rational matrix using rational approximation methods \cite{Saad-Guide-Meidlar}. 
For a rational matrix $R(\lambda)$, eigenvalues can be approximated using recursive methods, as outlined in \cite{Mehrmann-Voss}, where one starts 
with an initial guess for the eigenvalues. The initial guess plays an important role for the speed of convergence of the iterative approximation. There is, however, a 
different method where one goes through a rational approximation 
\cite{Saad-Guide-Meidlar} , where the eigenvalues are computed inside a closed 
contour. In either case it is necessary to determine the location of eigenvalues to reduce computational effort.

In \cite{Hadimani-Pallavi-Jayaraman}, the authors consider a 
particular type of rational matrices and derive bounds on the moduli of 
eigenvalues. The current work is an extension of \cite{Hadimani-Pallavi-Jayaraman} 
for an arbitrary rational matrix. We first show that an arbitrary rational matrix 
$R(\lambda)= \begin{bmatrix}
r_{ij}(\lambda)
\end{bmatrix} _{p\times p}$ can be expressed as $R(\lambda) = P(\lambda) +
\displaystyle \sum_{j=1}^{n} \sum_{k=1}^{m_j} \frac{B_k^{(j)}}{(\lambda-a_j)^k},$
where $P(\lambda)=C_m\lambda^m- C_{m-1}\lambda^{m-1} - \cdots -C_1 \lambda - C_0$ 
is a matrix polynomial and the coefficients $C_i$'s, $B_k^{(j)}$'s are square matrices of size 
$p\times p$ and $a_j \in \mathbb{C}$. We associate to $R(\lambda)$ a block matrix 
$\mathcal{C}_R$ whose blocks consist of the coefficient matrices of $R(\lambda)$. 
We then associate a scalar real rational function $q(x) = x^m-||C_{m-1}||x^{m-1}-
\cdots - ||C_1|| x - ||C_0|| - \displaystyle \sum_{j=1}^{n} \sum_{k=1}^{m_j} 
\frac{||B_k^{(j)}||}{(x- |a_j|)^k}$ to $R(\lambda)$, where $||\cdot ||$ is any 
induced matrix norm. We prove that a zero of the real rational function $q(x)$ 
which is greater than the moduli of all the poles of $R(\lambda)$ serves as an 
upper bound on the moduli of eigenvalues of $R(\lambda)$. Since $q(x)$ can 
be considered as a rational matrix of size $1 \times 1$, we use bounds on the 
numerical radius of the block matrix associated with the scalar real rational 
function $q(x)$ to derive bounds on the moduli of eigenvalues of the rational 
matrix $R(\lambda)$. In Section \ref{sec-4} we have considered certain examples 
and compared our bounds with the existing results from the literature. In these 
examples we have observed that our bounds are better. To the best of our 
knowledge there are no ready bounds in the literature on the moduli of 
eigenvalues of rational matrices. However, one can convert a given rational 
matrix into a matrix polynomial and use existing results to obtain bounds. 
The emerging matrix polynomial frequently has high degree and coefficient matrices involve many 
terms, which results in tedious computations. 

The manuscript is organized as follows. Section \ref{sec-1} is introductory. 
Section \ref{sec-2} collects basic facts about rational matrices and inequalities 
involving the numerical radius that will be used later on. The main results 
are presented in Section \ref{sec-3}, which is subdivided into subsections for 
ease of reading. These subsections are self-explanatory. The manuscript ends 
with numerical examples illustrating the results obtained.

\section{Preliminaries}\hspace*{\fill}
\label{sec-2}

The space of all $p \times p$ matrices over complex numbers is denoted by $M_{p}$. 
For $A \in M_p$, the expressions $||A||_2, ||A||_\infty$ and $||A||_1$ stand for the spectral norm, the maximum row sum norm and the maximum column 
sum norm of $A$, respectively. A $p\times p$ rational matrix $R(\lambda)$ is 
said to be regular if the determinant of $R(\lambda)$ does not vanish identically. 
For a regular rational matrix $R(\lambda)$ a scalar $\lambda_0$ is called an 
eigenvalue of $R(\lambda)$ if there exists a nonzero vector $v \in \mathbb{C}^n$ 
such that $R(\lambda_0)v=0$, with each entry of $R(\lambda_0)$ to be finite complex 
number; that is, entries of $R(\lambda_0)$ cannot be undefined or infinite. 
The vector $v$ obtained is called an eigenvector of $R(\lambda)$ corresponding 
to the eigenvalue $\lambda_0$.

\begin{example}
Consider $R_1(\lambda)= \begin{bmatrix}
\frac{1}{\lambda} & 1 \\
1 & \lambda
\end{bmatrix}$. Then $\text{det}R_1(\lambda)= 
(\frac{1}{\lambda}) \lambda -1 =0$. Therefore $R_1(\lambda)$ is not regular. On 
the other hand let $R_2(\lambda)= \begin{bmatrix}
\lambda & 0 \\
0 & \lambda
\end{bmatrix}$. The determinant of $R_2(\lambda)$ is $\lambda^2$, which is not 
identically zero. Hence $R_2(\lambda)$ is regular. Note that for any nonzero vector 
$v =\begin{bmatrix}
v_1 \\
v_2
\end{bmatrix} \in \mathbb{C}^2$, $R_2(\lambda)v=0$ if and only if $\lambda = 0$. 
Therefore $\lambda = 0$ is the only eigenvalue of $R_2(\lambda)$. Further, if the 
regularity condition is relaxed then the set of moduli of eigenvalues of a rational 
matrix need not be bounded. To see this, if $\lambda_0$ is any nonzero complex number 
then the vector $v =\begin{bmatrix}
-\lambda_0 \\
1
\end{bmatrix} \in \mathbb{C}^2$ satisfies $R_1(\lambda_0)v= \begin{bmatrix}
\frac{1}{\lambda_0} & 1 \\
1 & \lambda_0
\end{bmatrix} \begin{bmatrix}
-\lambda_0 \\
1
\end{bmatrix} = \begin{bmatrix}
0 \\
0
\end{bmatrix}$. Therefore every nonzero complex number is an eigenvalue of $R_1(\lambda)$,
which in turn proves that the set of moduli of eigenvalues of $R_1(\lambda)$ is 
unbounded.
\end{example} 

\medskip
A $p \times p$ matrix polynomial $P(\lambda)$ is said to be unimodular 
if $\text{det}(P(\lambda))$ is a nonzero constant (see page 230 of \cite{Gohberg} for details). Consider 
a $p\times p$ rational matrix $R(\lambda)=[r_{ij}(\lambda)]$,  where $\displaystyle r_{ij}(\lambda) = 
\frac{p_{ij}(\lambda)}{q_{ij}(\lambda)}$ is a scalar complex rational function and 
$p_{ij}, q_{ij}$ are scalar complex polynomials for $i,j=1,2,\ldots,p$. Let 
$d(\lambda)$ be the monic least common multiple of denominators of all the entries 
of $R(\lambda)$. There exist unimodular matrix polynomials $U(\lambda)$ and 
$V(\lambda)$ of size $p\times p$ such that 
\begin{equation} \label{Eqn-Smith-McMillan form}
U(\lambda) R(\lambda) V(\lambda) = \begin{bmatrix}
\displaystyle \text{diag}\bigg(\frac{\epsilon_1(\lambda)}{\psi_1(\lambda)},\cdots,
\frac{\epsilon_r(\lambda)}{\psi_r(\lambda)}\bigg) & 0 \\
0 & 0
\end{bmatrix}_{p\times p},
\end{equation}
where $r$ is the size of the largest nonvanishing minor of $R(\lambda)$ such that 
$r \leq p$, $\epsilon_i(\lambda), \psi_i(\lambda)$ are monic scalar polynomials 
with $\gcd\textbf{(}\epsilon_i(\lambda), \psi_i(\lambda)\textbf{)} =1$; that is, 
$ \epsilon_i(\lambda)$ and $\psi_i(\lambda)$ are coprime and $\epsilon_i(\lambda)$ 
divides $\epsilon_{i+1}(\lambda)$, $\psi_{i+1}(\lambda)$ divides $ \psi_i(\lambda)$ 
for $1 \leq i \leq r-1$, and $\psi_1(\lambda) = d(\lambda)$ (see \cite{Rosenbrock}, 
\cite{Amparan-Dopico} for details). The rational matrix on the right hand side of 
Equation \eqref{Eqn-Smith-McMillan form} is called the Smith-McMillan form of 
$R(\lambda)$. A scalar $\lambda_0 \in \mathbb{C}$ is said to be a finite pole of 
$R(\lambda)$ if $\lambda_0$ is a zero of $\psi_1(\lambda)$. If $\lambda_0$ is a 
finite pole of $R(\lambda)$, we can write 
$\psi_i(\lambda) = (\lambda - \lambda_0)^{\alpha_i} \widetilde{\psi_i}(\lambda)$, 
where $\alpha_i$ is a nonnegative integer and $\widetilde{\psi_i}(\lambda_0) \neq 0$ 
for $1\leq i \leq r$. The nonzero numbers in $(\alpha_1,\alpha_2,\cdots,\alpha_r)$ 
are called the partial multiplicities of $\lambda_0$ as a pole of $R(\lambda)$.  
The maximum partial multiplicity of $\lambda_0$ is called the order 
of $\lambda_0$ as a pole of $R(\lambda)$. In other words $\lambda_0$ is a pole of 
$R(\lambda)$ of order $k$ if it is a zero of $\psi_1(\lambda)$ of order $k$.

Recall that a complex scalar rational function $r(\lambda)$ can 
be expressed in the form, 
\begin{equation}\label{Eqn-scalar rational function}
\displaystyle r(\lambda) = p(\lambda) +  \frac{s(\lambda)}{q(\lambda)} 	
\end{equation}
where $p(\lambda)$, $s(\lambda)$ and $q(\lambda)$ are polynomials of degree $m$, 
$d_1$ and $d_2$ respectively with $d_1<d_2$, $q(\lambda) \not \equiv 0$ and 
$\gcd\textbf{(}s(\lambda), q(\lambda)\textbf{)} =1$. Let $q(\lambda) = 
c(\lambda-a_1)^{m_1} (\lambda-a_2)^{m_2} \cdots (\lambda-a_n)^{m_n}$ 
be the factorization of $q(\lambda)$, where $a_j$ is a zero of $q(\lambda)$ of 
multiplicity $m_j \in \mathbb{N}$ for $j=1,2,\ldots,n$ and $n$ is the number of 
distinct zeros of $q(\lambda)$. It is known that $\displaystyle 
\frac{s(\lambda)}{q(\lambda)}$ can be expressed as a polynomial in $\displaystyle 
\frac{1}{\lambda-a_j}$ for $j =1, \ldots, n$ \cite{Ponnusamy}. 
Therefore $r(\lambda)$ can be written as,
\begin{equation}\label{Eqn-scalar rational function-general form}
r(\lambda) = p(\lambda) + \displaystyle \sum_{j=1}^{n} \sum_{k=1}^{m_j} 
\frac{b_k^{(j)}}{(\lambda-a_j)^k}
\end{equation}
where, $a_j$'s denote the poles of $r(\lambda)$ with orders $m_j \geq 1$ for 
$1 \leq j \leq n$. Thus, given an arbitrary $p \times p$ rational 
matrix $R(\lambda)$ we can write 
\begin{equation}\label{Eqn- rational matrix}
R(\lambda) = P(\lambda) +\displaystyle \sum_{j=1}^{n} \sum_{k=1}^{m_j} 
\frac{B_k^{(j)}}{(\lambda-a_j)^k},
\end{equation}
where $P(\lambda)=C_m\lambda^m- C_{m-1}\lambda^{m-1} - \cdots -C_1 \lambda - C_0$ 
is a matrix polynomial, $C_i$'s, $B_k^{(j)}$'s are square matrices of size 
$p\times p$ and each $a_j \in \mathbb{C}$ is a finite pole of $R(\lambda)$ of order 
$m_j$ for $1 \leq j \leq n$. The particular case of \eqref{Eqn- rational matrix} 
when $P(\lambda)$ is a monic linear matrix polynomial and each pole is of order 
$1$ reduces to the following form studied in \cite{Hadimani-Pallavi-Jayaraman} in 
the context of eigenvalue location  
\begin{equation}\label{Eqn-rational matrix-linear}
R(\lambda) = \displaystyle -C_0 +I\lambda +\frac{B_1}{\lambda-a_1}+ 
\frac{B_2}{\lambda-a_2} + \cdots + \frac{B_n} {\lambda-a_n},
\end{equation} 
where $a_j$'s are distinct complex numbers. 

\noindent 
\textbf{Assumption:} Throughout this paper we assume that the degree of 
$P(\lambda)$ is at least one and that the leading coefficient of $P(\lambda)$ is 
the identity matrix of size $p$; that is, $C_m=I_p$. 

\medskip
As mentioned in the introduction, we make use of bounds on the numerical radius of a matrix 
to estimate bounds on the moduli of eigenvalues of $R(\lambda)$. Given $A \in M_p$, the set 
$W(A) = \{ x^*Ax : x \in \mathbb{C}^p$ and  $x^*x=1\}$ is called the numerical range of 
$A$ and $w(A) = \sup \{ |\lambda| : \lambda \in W(A) \}$ is called the numerical radius 
of $A$. If $\rho(A)$ denotes the spectral radius of $A$, then for any eigenvalue $\lambda_0$ 
of $A$, 
\begin{equation}\label{numerical radius-relationship}
|\lambda_0| \leq \rho(A) \leq w(A) \leq ||A||_2.
\end{equation}
Other well known properties of the numerical radius are  
\begin{itemize}
\item $w(A) \geq 0$.
\item $w(\alpha A) = |\alpha| w(A)$ for every $\alpha \in \mathbb{C}$.
\item $w(A+B) \leq w(A)+w(B)$ for $A$, $B$ in $M_p$. 
\end{itemize}

\medskip
Before stating our main results we state without proof bounds on the numerical radius 
of matrices.

\begin{lemma}[\cite{Abu-Omar}] \label{Lem-numerical radius-1}
Let $A \in M_k$, $B \in M_{k,s}$, $C \in M_{s,k}$ and $D \in M_{s}$, and 
let $K = \begin{bmatrix}
A & B \\
C & D
\end{bmatrix}$. Then 
$w(K) \leq \displaystyle \frac{1}{2} \Big(w(A)+ w(D) +\sqrt{(w(A)-w(D))^2+ 4w^2(K_0)}\Big)$,
where $K_0 = \begin{bmatrix}
0 & B \\
C & 0
\end{bmatrix}$. 
\end{lemma}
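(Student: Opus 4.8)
The plan is to work directly from the variational description of the numerical radius, $w(K) = \sup_{\|x\|=1} |x^* K x|$, and to reduce the estimate to a two–dimensional quadratic optimization. First I would partition a unit vector $x \in \mathbb{C}^{k+s}$ conformally with $K$ as $x = \begin{bmatrix} x_1 \\ x_2 \end{bmatrix}$, with $x_1 \in \mathbb{C}^k$ and $x_2 \in \mathbb{C}^s$, and set $\alpha = \|x_1\|$ and $\beta = \|x_2\|$, so that $\alpha^2 + \beta^2 = 1$. Expanding the quadratic form gives
\[
x^* K x = x_1^* A x_1 + x_2^* D x_2 + (x_1^* B x_2 + x_2^* C x_1),
\]
and the parenthesised term is precisely $x^* K_0 x$. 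By the triangle inequality it therefore suffices to bound the three pieces separately. For the diagonal blocks, homogeneity of the numerical range yields $|x_1^* A x_1| \leq w(A)\,\alpha^2$ and $|x_2^* D x_2| \leq w(D)\,\beta^2$, since $x_1/\|x_1\|$ and $x_2/\|x_2\|$ are unit vectors.

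The crux is the off–diagonal term, for which I claim $|x^* K_0 x| \leq 2\, w(K_0)\,\alpha\beta$. To see this, write $x_1 = \alpha u$ and $x_2 = \beta v$ with $u,v$ unit vectors (the cases $\alpha=0$ or $\beta=0$ being trivial), so that $x^* K_0 x = \alpha\beta\,(u^* B v + v^* C u)$. Now form the unit vector $y = \tfrac{1}{\sqrt 2}\begin{bmatrix} u \\ v \end{bmatrix} \in \mathbb{C}^{k+s}$; a direct computation gives $y^* K_0 y = \tfrac12\,(u^* B v + v^* C u)$, whence $|u^* B v + v^* C u| = 2\,|y^* K_0 y| \leq 2\, w(K_0)$, and the claim follows. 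This scaling step is the main obstacle, since the naive bound $|x^* K_0 x| \leq w(K_0)$ discards the factor $\alpha\beta$ that is exactly what makes the final optimization sharp; the auxiliary vector $y$ is the device that recovers it.

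Combining the three estimates gives, for every unit vector $x$,
\[
|x^* K x| \;\leq\; w(A)\,\alpha^2 + 2\, w(K_0)\,\alpha\beta + w(D)\,\beta^2 \;=\; \begin{bmatrix} \alpha & \beta \end{bmatrix} M \begin{bmatrix} \alpha \\ \beta \end{bmatrix}, \quad \text{where } M = \begin{bmatrix} w(A) & w(K_0) \\ w(K_0) & w(D) \end{bmatrix}.
\]
Since $\alpha^2 + \beta^2 = 1$, the right–hand side is the Rayleigh quotient of the real symmetric $2\times 2$ matrix $M$ evaluated on a unit vector, and is therefore at most the largest eigenvalue of $M$.

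Finally I would compute that eigenvalue explicitly. For a symmetric $2\times 2$ matrix the larger root of the characteristic polynomial is
\[
\lambda_{\max}(M) = \frac{1}{2}\Big(w(A) + w(D) + \sqrt{(w(A)-w(D))^2 + 4\, w^2(K_0)}\Big),
\]
which is exactly the claimed bound. Taking the supremum over all unit vectors $x$ then gives $w(K) \leq \lambda_{\max}(M)$, completing the argument. The remaining verifications — the eigenvalue formula for $M$ and the fact that the Rayleigh quotient over the unit circle is dominated by the top eigenvalue — are routine, so the entire proof rests on the two observations above: the block expansion of $x^* K x$ and the scaled off–diagonal estimate $|x^* K_0 x| \leq 2\,w(K_0)\,\alpha\beta$.
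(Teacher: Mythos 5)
Your proof is correct. Note that the paper does not prove this lemma at all --- it is stated without proof as a quoted result of Abu-Omar and Kittaneh --- so there is no in-paper argument to compare against; your derivation (block expansion of $x^*Kx$, the scaled off-diagonal estimate $|x^*K_0x|\leq 2w(K_0)\alpha\beta$ via the auxiliary unit vector $\tfrac{1}{\sqrt{2}}\begin{bmatrix} u \\ v\end{bmatrix}$, and the reduction to the largest eigenvalue of the $2\times 2$ matrix $M$) is complete, and is essentially the standard proof from the cited reference.
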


\begin{lemma}[\cite{Yamazaki}]\label{Lem-numerical radius-2}
Let $B \in M_{k,s}$, $C \in M_{s,k}$, then
$\displaystyle w\left(\begin{bmatrix}
0 & B \\
C & 0
\end{bmatrix}\right) \leq \frac{1}{2}\big(||B||_2+ ||C||_2 \big)$.
\end{lemma}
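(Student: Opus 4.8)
The plan is to work directly from the definition
$w(K_0) = \sup\{|x^* K_0 x| : x \in \mathbb{C}^{k+s},\ x^*x = 1\}$ and to exploit the off-diagonal block structure. First I would partition an arbitrary unit vector as $x = \begin{bmatrix} u \\ v \end{bmatrix}$ with $u \in \mathbb{C}^k$, $v \in \mathbb{C}^s$ and $||u||^2 + ||v||^2 = 1$. A short computation gives $K_0 x = \begin{bmatrix} Bv \\ Cu \end{bmatrix}$, so that $x^* K_0 x = u^* B v + v^* C u$, which splits the quadratic form into two bilinear pieces.

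The next step is to bound these two terms separately. By the Cauchy--Schwarz inequality together with the submultiplicativity $||Mx|| \leq ||M||_2\, ||x||$ of the spectral norm, we get $|u^* B v| \leq ||u||\, ||Bv|| \leq ||B||_2\, ||u||\, ||v||$ and likewise $|v^* C u| \leq ||C||_2\, ||u||\, ||v||$. Combining these with the triangle inequality yields $|x^* K_0 x| \leq (||B||_2 + ||C||_2)\, ||u||\, ||v||$.

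Finally, I would invoke the arithmetic--geometric mean inequality $||u||\, ||v|| \leq \tfrac{1}{2}(||u||^2 + ||v||^2) = \tfrac{1}{2}$, where the last equality is exactly the unit-norm constraint on $x$. This produces $|x^* K_0 x| \leq \tfrac{1}{2}(||B||_2 + ||C||_2)$ for every unit vector $x$, and taking the supremum over $x$ gives the claimed bound.

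There is no serious obstacle here; the argument is elementary once the block product is written out. The only point that requires care is tracking the norm constraint: the factor $\tfrac{1}{2}$ is generated precisely by the AM--GM step fed by $||u||^2 + ||v||^2 = 1$, so it is essential to treat $u$ and $v$ as the two blocks of a single unit vector rather than normalizing them independently, which would lose the constant.
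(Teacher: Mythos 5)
Your argument is correct and complete. The paper itself offers no proof of this lemma: it is stated ``without proof'' with a citation to Yamazaki, whose paper establishes a sharper two-sided estimate for the numerical radius of such off-diagonal operator matrices. What you have supplied is a self-contained elementary derivation of exactly the inequality the paper needs: writing a unit vector as $x = \begin{bmatrix} u \\ v \end{bmatrix}$, computing $x^* K_0 x = u^* B v + v^* C u$, applying Cauchy--Schwarz and the defining property of the spectral norm to each bilinear term, and then feeding the constraint $\|u\|^2 + \|v\|^2 = 1$ into the AM--GM inequality to produce the factor $\tfrac12$. Each step is valid, and your closing remark identifies the one place where care is genuinely required: $u$ and $v$ must be treated as blocks of a single unit vector, since normalizing them separately would destroy the constant. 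The only cosmetic quibble is that $\|Bv\| \leq \|B\|_2\,\|v\|$ is the consistency of the induced operator norm with the Euclidean vector norm rather than ``submultiplicativity'' in the matrix-product sense, but this does not affect the argument. Compared with invoking Yamazaki's result, your route buys a short, fully elementary proof of precisely the bound used in inequality \eqref{Eqn-numerical radius inq}, at the cost of not recovering the stronger statements available in the cited reference.
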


\medskip
\noindent
As a consequence of Lemmas \ref{Lem-numerical radius-1} and 
\ref{Lem-numerical radius-2} we have the following inequality:
\begin{equation}\label{Eqn-numerical radius inq}
w(K) \leq \displaystyle \frac{1}{2} \Big(w(A)+ w(D) +\sqrt{(w(A)-w(D))^2+ 
(||B||_2+||C||_2)^2} \Big)
\end{equation}

\noindent
It is also not hard to prove the following result as in the example given in Section 
$1.3$ of \cite{Gustafson-Rao}. We state this as a lemma for ready reference.

\begin{lemma}\label{Lem-numerical radius-Jordan block}
Let $L_n$ be $n \times n$ matrix given by $L_n =\begin{bmatrix}
a & 0 & \cdots & 0 & 0\\
1 & a & \cdots & 0 & 0\\
\vdots & \vdots & \ddots & \vdots & \vdots \\
0 & 0 & \cdots & 1 & a 
\end{bmatrix}$, where $a$ is a complex number. Then $w(L_n) \leq \displaystyle |a|+ 
\cos\Big(\frac{\pi}{n+1}\Big)$. 
If $a$ is a nonnegative real number then equality holds.
\end{lemma}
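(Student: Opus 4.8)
The plan is to split $L_n$ into its scalar part and its nilpotent part and treat them separately using the subadditivity of the numerical radius. Write $L_n = aI_n + N$, where $N$ is the $n \times n$ nilpotent matrix having $1$ in each subdiagonal entry and $0$ elsewhere. Since $W(I_n) = \{1\}$ we have $w(aI_n) = |a|\,w(I_n) = |a|$, and the subadditivity property $w(A+B) \le w(A)+w(B)$ recorded in Section~\ref{sec-2} gives
\begin{equation*}
w(L_n) \le w(aI_n) + w(N) = |a| + w(N).
\end{equation*}
Thus the whole problem reduces to establishing that $w(N) = \cos\big(\tfrac{\pi}{n+1}\big)$.

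To compute $w(N)$ I would first observe that its numerical range is a disk centred at the origin. Indeed, for any $\theta \in \mathbb{R}$ put $D_\theta = \mathrm{diag}\big(1, e^{-i\theta}, e^{-2i\theta}, \dots, e^{-(n-1)i\theta}\big)$, which is unitary; a direct check of the subdiagonal entries shows $D_\theta\,(e^{i\theta}N)\,D_\theta^* = N$, so $e^{i\theta}N$ is unitarily similar to $N$. Hence $e^{i\theta}W(N) = W(e^{i\theta}N) = W(N)$ for every $\theta$, which forces $W(N)$ to be invariant under rotations about $0$. Consequently $w(N) = \max\{\,\mathrm{Re}(x^*Nx) : \|x\|=1\,\}$, and since $\mathrm{Re}(x^*Nx) = x^*\big(\tfrac{N+N^*}{2}\big)x$, this maximum equals the largest eigenvalue of the Hermitian matrix $\tfrac{1}{2}(N+N^*)$. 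Now $\tfrac{1}{2}(N+N^*)$ is $\tfrac12$ times the symmetric tridiagonal matrix with zero diagonal and all sub/superdiagonal entries equal to $1$, whose eigenvalues are the classical values $2\cos\big(\tfrac{k\pi}{n+1}\big)$ for $k=1,\dots,n$. The largest of these occurs at $k=1$, yielding $w(N) = \cos\big(\tfrac{\pi}{n+1}\big)$ and hence the asserted upper bound.

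For the equality statement, suppose $a \ge 0$ is real. Let $x$ be the real unit eigenvector of $\tfrac12(N+N^*)$ associated with its largest eigenvalue $\cos\big(\tfrac{\pi}{n+1}\big)$; explicitly one may take $x_j = \sqrt{\tfrac{2}{n+1}}\,\sin\big(\tfrac{j\pi}{n+1}\big)$ for $j = 1,\dots,n$. Because $x$ is real and $N$ is real, $x^*Nx = x^TNx = x^T\big(\tfrac{N+N^*}{2}\big)x = \cos\big(\tfrac{\pi}{n+1}\big)$, so
\begin{equation*}
x^* L_n x = a\,x^*x + x^*Nx = a + \cos\Big(\tfrac{\pi}{n+1}\Big),
\end{equation*}
which is real and nonnegative. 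Therefore $w(L_n) \ge |x^*L_nx| = a + \cos\big(\tfrac{\pi}{n+1}\big)$, and combined with the upper bound this forces equality.

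The only genuinely non-routine point is the identification $w(N) = \cos\big(\tfrac{\pi}{n+1}\big)$: it hinges on the rotational symmetry of $W(N)$, which converts the numerical radius into the top eigenvalue of a Hermitian matrix, together with the well-known spectrum of the $0$--$1$ tridiagonal matrix. Everything else---subadditivity, the evaluation $w(aI_n)=|a|$, and the construction of the extremal vector in the equality case---is routine, and the argument mirrors the example in Section~$1.3$ of \cite{Gustafson-Rao}.
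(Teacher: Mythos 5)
Your proof is correct, and it follows exactly the route the paper gestures at: the paper states this lemma without proof, citing the example in Section~$1.3$ of \cite{Gustafson-Rao}, which is precisely the decomposition $L_n = aI_n + N$ together with the rotational invariance of $W(N)$ and the known spectrum of the $0$--$1$ tridiagonal matrix. Your write-up supplies the details (including the explicit extremal vector for the equality case) that the paper leaves to the reference.
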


\section{Main results}\label{sec-3}
This section contains the main results of this manuscript.
\subsection{Associated block matrix corresponding to rational matrix}\label{sec-3.1}

\

Consider the rational matrix $R(\lambda)$ given in Equation 
\eqref{Eqn- rational matrix}. We associate to $R(\lambda)$ a block 
matrix $\mathcal{C}_R$ of size \small{$p\left(m + \displaystyle \sum_{j=1}^{n}
\frac{m_j(m_j+1)}{2}\right) \times p\left(m + \displaystyle \sum_{j=1}^{n}
\frac{m_j(m_j+1)}{2}\right)$}, which is defined as,
\begin{equation}\label{Eqn-Block matrix}
\mathcal{C}_R= \begin{bmatrix}
\mathcal{A}_n & 0 & \cdots & 0 & -\mathcal{F}_n \\
0 & \mathcal{A}_{n-1} & \cdots & 0 & -\mathcal{F}_{n-1} \\
\vdots & \vdots & \ddots & \vdots & \vdots \\
0 & 0 & \cdots & \mathcal{A}_1 & -\mathcal{F}_1 \\
\mathcal{B}_n & \mathcal{B}_{n-1} & \cdots & \mathcal{B}_1 & \mathcal{B}_0
\end{bmatrix},
\end{equation}
where for each $1 \leq j \leq n$ the blocks are defined as follows: \\
$\mathcal{A}_j = \text{diag}(\textbf{A}_{m_j}^{(j)},\textbf{A}_{m_j-1}^{(j)},
\cdots,\textbf{A}_1^{(j)})$ is a block diagonal matrix of size $\displaystyle 
\frac{pm_j(m_j+1)}{2} \times \frac{pm_j(m_j+1)}{2}$, with $\textbf{A}_k^{(j)} 
=\begin{bmatrix}
a_jI & I & 0 & \cdots & 0 & 0 \\
0 & a_jI & I & \cdots & 0 & 0 \\
\vdots & \vdots & \vdots & \ddots & \vdots & \vdots \\
0 & 0 & 0 & \cdots & a_jI & I \\
0 & 0 & 0 & \cdots & 0 & a_jI
\end{bmatrix}_{pk \times pk}$, $1\leq k \leq m_j$ and 

\noindent
$\mathcal{B}_j = \begin{bmatrix}
\textbf{B}_{m_j}^{(j)} & \textbf{B}_{m_j-1}^{(j)} & \cdots & \textbf{B}_1^{(j)}
\end{bmatrix}$ is a block row matrix of size $\displaystyle pm \times 
\frac{pm_j(m_j+1)}{2}$, with $\textbf{B}_k^{(j)} = \begin{bmatrix}
0 & 0 & \cdots & 0 \\
0 & 0 & \cdots & 0 \\
\vdots & \vdots & \ddots & \vdots \\
B_{k}^{(j)} & 0 & \cdots & 0
\end{bmatrix}_{pm \times pk}, 1 \leq k \leq m_j $, \\
$\mathcal{B}_0 = \begin{bmatrix}
0 & I & 0 & \cdots & 0\\
0 & 0 & I & \cdots & 0\\
\vdots & \vdots & \vdots & \ddots & \vdots\\
0 & 0 & 0 & \cdots & I\\
C_0 & C_1 & C_2 & \cdots & C_{m-1}\\ 
\end{bmatrix}_{pm \times pm}$ and
$\mathcal{F}_j = \begin{bmatrix}
\textbf{F}_{m_j} \\
\textbf{F}_{m_j-1} \\
\vdots \\
\textbf{F}_1
\end{bmatrix}$ is block column \\
matrix of size $\displaystyle \frac{pm_j(m_j+1)}{2} 
\times pm$ with $ \textbf{F}_k =\begin{bmatrix}
0 & 0 & \cdots & 0\\
0 & 0 & \cdots & 0\\
\vdots & \vdots & \ddots & \vdots \\
I & 0 & \cdots & 0 \\
\end{bmatrix}_{pk \times pm}, 1 \leq k \leq m_j$ for $j = 1,\cdots, n$. 

\medskip
For illustration we present the block matrix associated with a 
scalar complex rational function. Consider a scalar complex rational function 
given by
\begin{equation}\label{Eqn-scalar rational function-2}
r(\lambda) = \lambda^m- c_{m-1}\lambda^{m-1} - \cdots -c_1 \lambda - c_0 -  
\displaystyle \sum_{j=1}^{n} \sum_{k=1}^{m_j} \frac{b_k^{(j)}}{(\lambda-a_j)^k}, 
\end{equation}
where $b_k^{(j)}$'s, $a_j$'s and $c_i$'s are complex numbers.
Since $r(\lambda)$ can be considered as a $1\times 1$ rational matrix, 
we have an associated block matrix of size $\left( m + \displaystyle 
\sum_{j=1}^{n}\frac{m_j(m_j+1)}{2}\right) \times \left(m + \displaystyle 
\sum_{j=1}^{n}\frac{m_j(m_j+1)}{2}\right)$ given by 
\begin{equation}\label{Eqn-block matrix for scalar rational fcn}
\mathcal{C}_r= \begin{bmatrix}
\mathcal{A}_n & 0 & \cdots & 0 & -\mathcal{F}_n \\
0 & \mathcal{A}_{n-1} & \cdots & 0 & -\mathcal{F}_{n-1} \\
\vdots & \vdots & \ddots & \vdots & \vdots \\
0 & 0 & \cdots & \mathcal{A}_1 & -\mathcal{F}_1 \\
\mathcal{B}_n & \mathcal{B}_{n-1} & \cdots & \mathcal{B}_1 & \mathcal{B}_0
\end{bmatrix}, 
\end{equation}
where for each $1 \leq j \leq n$,   
$\mathcal{A}_j = \text{diag}(A_{m_j}^{(j)},A_{m_j-1}^{(j)},\cdots,A_1^{(j)})$ is a 
block diagonal matrix of size $\displaystyle \frac{m_j(m_j+1)}{2} \times \frac{m_j(m_j+1)}{2}$ with $A_k^{(j)} 
=\begin{bmatrix}
a_j & 1 & 0 & \cdots & 0 & 0 \\
0 & a_j & 1 & \cdots & 0 & 0 \\
\vdots & \vdots & \vdots & \ddots & \vdots & \vdots \\
0 & 0 & 0 & \cdots & a_j & 1 \\
0 & 0 & 0 & \cdots & 0 & a_j
\end{bmatrix}_{k \times k}$ for $1\leq k \leq m_j$,
$\mathcal{B}_j = \begin{bmatrix}
B_{m_j}^{(j)} & B_{m_j-1}^{(j)} & \cdots & B_1^{(j)}
\end{bmatrix}$ is a block row matrix of size $\displaystyle m \times 
\frac{m_j(m_j+1)}{2}$ with $B_k^{(j)} = \begin{bmatrix}
0 & 0 & \cdots & 0 \\
0 & 0 & \cdots & 0 \\
\vdots & \vdots & \ddots & \vdots \\
-b_{k}^{(j)} & 0 & \cdots & 0
\end{bmatrix}_{m \times k}$ for $ 1 \leq k \leq m_j $, 
$\mathcal{B}_0 = \begin{bmatrix}
0 & 1 & 0 & \cdots & 0\\
0 & 0 & 1 & \cdots & 0\\
\vdots & \vdots & \vdots & \ddots & \vdots\\
0 & 0 & 0 & \cdots & 1\\
c_0 & c_1 & c_2 & \cdots & c_{m-1}\\ 
\end{bmatrix}_{m \times m}$, and
$\mathcal{F}_j = \begin{bmatrix}
F_{m_j} \\
F_{m_j-1} \\
\vdots \\
F_1
\end{bmatrix}$ is a block column matrix of size $\displaystyle \frac{m_j(m_j+1)}{2} 
\times m$ with $ F_k =\begin{bmatrix}
0 & 0 & \cdots & 0\\
0 & 0 & \cdots & 0\\
\vdots & \vdots & \ddots & \vdots \\
1 & 0 & \cdots & 0 \\
\end{bmatrix}_{k \times m}, 1 \leq k \leq m_j$ for $j = 1,\cdots, n$. 
Note that for $1 \leq j \leq n$, the blocks $A_k^{(j)}$'s in the matrix $\mathcal{A}_j$ are Jordan like blocks of size $k \times k$ corresponding to the pole $a_j$, where $1 \leq k \leq m_j$.

\medskip
Our first result is that the set of eigenvalues of $R(\lambda)$ is contained 
in the set of eigenvalues of $\mathcal{C}_R$.

\begin{theorem}\label{Thm- eigenvalues- matrix rational function}
Let $R(\lambda)$ be a $p\times p$ rational matrix given in Equation 
\eqref{Eqn- rational matrix} and $\mathcal{C}_R$ be the associated block 
matrix described in Equation \eqref{Eqn-Block matrix}. Then the eigenvalues 
of $R(\lambda)$ are also eigenvalues of the corresponding block matrix
$\mathcal{C}_R$.
\end{theorem}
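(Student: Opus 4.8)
The plan is to show that if $\lambda_0$ is an eigenvalue of $R(\lambda)$, then $\det(\lambda_0 I - \mathcal{C}_R) = 0$, so that $\lambda_0$ is an eigenvalue of the block matrix $\mathcal{C}_R$. The natural strategy is to construct, from an eigenvector $v \in \mathbb{C}^p$ of $R(\lambda)$ (so $R(\lambda_0)v = 0$ with $\lambda_0$ not a pole), an explicit nonzero vector $\widehat{v}$ in the larger space on which $\mathcal{C}_R$ acts such that $\mathcal{C}_R \widehat{v} = \lambda_0 \widehat{v}$. This is the linearization philosophy: the block matrix $\mathcal{C}_R$ is designed precisely so that its eigenvalue equation encodes the identity $R(\lambda_0)v = 0$, with the auxiliary block rows serving to recover the polynomial powers $\lambda_0^i v$ and the resolvent-type quantities $\tfrac{1}{(\lambda_0 - a_j)^k}v$.

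First I would partition $\widehat{v}$ into blocks conforming to the block structure of $\mathcal{C}_R$: a collection of blocks associated with each pole $a_j$ (matching the block-diagonal pieces $\mathcal{A}_j$ built from the Jordan-like blocks $\mathbf{A}_k^{(j)}$), together with a final block of length $pm$ associated with the companion-type block $\mathcal{B}_0$. The guiding ansatz is that the final $pm$-block should be the standard polynomial-linearization vector $\begin{bmatrix} v^T & \lambda_0 v^T & \cdots & \lambda_0^{m-1}v^T\end{bmatrix}^T$, while each pole-block should be chosen so that the companion structure of $\mathbf{A}_k^{(j)}$ forces its entries to be scalar multiples $\tfrac{1}{(\lambda_0 - a_j)^s}v$ of $v$ for the appropriate powers $s$. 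Concretely, I would read off the eigenvalue equation $\mathcal{C}_R \widehat{v} = \lambda_0 \widehat{v}$ block-row by block-row. The rows coming from $\mathcal{A}_j$ and $-\mathcal{F}_j$ give relations of the shape $(a_j I - \lambda_0 I)(\text{entry}) + (\text{next entry}) = 0$ or $= \lambda_0^{i}v$, which, since $\lambda_0 \neq a_j$, can be solved recursively to pin down each pole-block entry as the desired resolvent multiple of $v$; this is where the factors $\tfrac{1}{\lambda_0 - a_j}$ are generated.

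The crux is the last block row, the one involving $\mathcal{B}_n, \dots, \mathcal{B}_1, \mathcal{B}_0$. After substituting the solved values of all the pole-blocks and the polynomial-block, the bottom block equation of $\mathcal{C}_R \widehat{v} = \lambda_0 \widehat{v}$ should collapse exactly into
\begin{equation*}
\Bigg(\lambda_0^{m} I - \sum_{i=0}^{m-1} C_i \lambda_0^{i} - \sum_{j=1}^{n}\sum_{k=1}^{m_j}\frac{B_k^{(j)}}{(\lambda_0 - a_j)^k}\Bigg)v = R(\lambda_0)v = 0,
\end{equation*}
so this equation is satisfied precisely because $v$ is an eigenvector of $R(\lambda)$. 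The other block rows are satisfied by construction, and $\widehat{v} \neq 0$ since its polynomial-block already contains the nonzero entry $v$. I expect the main obstacle to be purely bookkeeping: verifying that the internal (non-final) block rows are consistent and that the recursion indeed produces the resolvent powers $\tfrac{1}{(\lambda_0-a_j)^k}$ in the exact positions dictated by the placement of the identity blocks in $\mathbf{B}_k^{(j)}$ and $\mathbf{F}_k$. Care is needed because each $\mathcal{A}_j$ bundles together Jordan-like blocks $\mathbf{A}_k^{(j)}$ of every size $1 \le k \le m_j$, so the resolvent terms of all orders up to $m_j$ must be assembled simultaneously; matching indices correctly across the staircase of blocks is the delicate part, though no hard estimate is involved.
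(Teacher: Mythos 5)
Your proposal is correct and follows essentially the same route as the paper: both construct an explicit eigenvector whose final block is $(v,\lambda_0 v,\ldots,\lambda_0^{m-1}v)$ and whose pole blocks are resolvent multiples of $v$, then verify $\mathcal{C}_R\widehat{v}=\lambda_0\widehat{v}$ block-row by block-row, with the bottom block row collapsing to $R(\lambda_0)v=0$. The one bookkeeping point to settle is a sign: the $-\mathcal{F}_j$ rows force the pole-block entries to be $\frac{-1}{(\lambda_0-a_j)^{s}}v$ rather than $\frac{+1}{(\lambda_0-a_j)^{s}}v$, which in turn flips the sign of the rational sum in your displayed identity so that it literally equals $R(\lambda_0)v$.
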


\begin{proof}
Let $(\lambda_0,v)$ be an eigenpair of $R(\lambda)$ so that $R(\lambda_0)v = 0$.  \\
For $1 \leq j \leq n$, define a vector 
\[
\mathcal{Y}_j = \begin{bmatrix}
    \textbf{Y}_{m_j}^{(j)} \\
    \textbf{Y}_{m_j -1}^{(j)} \\
    \vdots \\
    \textbf{Y}_{2}^{(j)} \\
    \textbf{Y}_{1}^{(j)}
\end{bmatrix},
\]
where 
\[
\textbf{Y}_k^{(j)} = \begin{bmatrix}
    \frac{-1}{(\lambda_0-a_j)^k} v\\
    \frac{-1}{(\lambda_0-a_j)^{k-1}} v\\
    \vdots\\
    \frac{-1}{(\lambda_0-a_j)^2}v\\
    \frac{-1}{(\lambda_0-a_j)}v
\end{bmatrix}
\]
for $1 \leq k \leq m_j$ and 
\[
\mathcal{Y}_0 = \begin{bmatrix}
    v\\
    \lambda_0v\\
    \vdots\\
    \lambda_0^{m-1}v
\end{bmatrix}.
\]

\noindent
Let 
\[
\mathcal{Y} = \begin{bmatrix}
    \mathcal{Y}_n \\
    \mathcal{Y}_{n-1} \\
    \vdots \\  
    \mathcal{Y}_1 \\
    \mathcal{Y}_0
\end{bmatrix}.
\]
\\                                                                                           
Consider 
\[
\mathcal{C}_R \mathcal{Y} = \begin{bmatrix}
    \mathcal{A}_n & 0 & \cdots & 0 & -\mathcal{F}_n \\
    0 & \mathcal{A}_{n-1} & \cdots & 0 & -\mathcal{F}_{n-1} \\
    \vdots & \vdots & \ddots & \vdots & \vdots \\
    0 & 0 & \cdots & \mathcal{A}_1 & -\mathcal{F}_1 \\
    \mathcal{B}_n & \mathcal{B}_{n-1} & \cdots & \mathcal{B}_1 & \mathcal{B}_0
\end{bmatrix} \begin{bmatrix}
    \mathcal{Y}_n \\
    \mathcal{Y}_{n-1} \\
    \vdots \\
    \mathcal{Y}_1 \\
    \mathcal{Y}_0
\end{bmatrix} =
\begin{bmatrix} 
    \mathcal{A}_n \mathcal{Y}_n-\mathcal{F}_n\mathcal{Y}_0 \\
    \mathcal{A}_{n-1} \mathcal{Y}_{n-1}-\mathcal{F}_{n-1}\mathcal{Y}_0 \\
    \vdots \\
    \mathcal{A}_1 \mathcal{Y}_1-\mathcal{F}_1\mathcal{Y}_0 \\
    \sum_{j=1}^{n} \mathcal{B}_j \mathcal{Y}_j + \mathcal{B}_0\mathcal{Y}_0
\end{bmatrix}.
\]\\
For $1 \leq j \leq n$, we have 
\begin{align*}
\mathcal{A}_{j} \mathcal{Y}_j-\mathcal{F}_{j}\mathcal{Y}_0= &\begin{bmatrix}
    \textbf{A}_{m_j}^{(j)} & 0 & \cdots & 0 & 0 \\
    0 & \textbf{A}_{m_j -1}^{(j)} & \cdots & 0 & 0 \\
    \vdots & \vdots & \ddots & \vdots & \vdots \\
    0 & 0 & \cdots & \textbf{A}_{2}^{(j)} & 0 \\
    0 & 0 & \cdots & 0 & \textbf{A}_{1}^{(j)} \\
\end{bmatrix} \begin{bmatrix}
    \textbf{Y}_{m_j}^{(j)} \\
    \textbf{Y}_{m_j -1}^{(j)} \\
    \vdots \\
    \textbf{Y}_{2}^{(j)} \\
    \textbf{Y}_{1}^{(j)}\\
\end{bmatrix}- \begin{bmatrix}
    \textbf{F}_{m_j} \\
    \textbf{F}_{m_j-1} \\
    \vdots \\
    \textbf{F}_2\\
    \textbf{F}_1
\end{bmatrix}\mathcal{Y}_0\\ 
=& \begin{bmatrix}
    \textbf{A}_{m_j}^{(j)}\textbf{Y}_{m_j}^{(j)}-\textbf{F}_{m_j}\mathcal{Y}_0\\
    \textbf{A}_{m_j-1}^{(j)}\textbf{Y}_{m_j-1}^{(j)}-\textbf{F}_{m_j-1}\mathcal{Y}_0\\
    \vdots\\
    \textbf{A}_2^{(j)}\textbf{Y}_2^{(j)}-\textbf{F}_2\mathcal{Y}_0\\
    \textbf{A}_1^{(j)}\textbf{Y}_1^{(j)}-\textbf{F}_1\mathcal{Y}_0
\end{bmatrix}.
\end{align*} \\
For $1 \leq k \leq m_j$, consider 
\begin{align*}
& \textbf{A}_k^{(j)}\textbf{Y}_k^{(j)}-\textbf{F}_k\mathcal{Y}_0 \nonumber\\
& = \begin{bmatrix}
    a_jI & I & \cdots & 0 & 0 \\
    0 & a_jI  & \cdots & 0 & 0 \\
    \vdots & \vdots & \ddots & \vdots & \vdots \\
    0 & 0 & \cdots & a_jI & I \\
    0 & 0 & \cdots & 0 & a_jI
\end{bmatrix} \begin{bmatrix}
    \frac{-1}{(\lambda_0-a_j)^k} v\\
    \frac{-1}{(\lambda_0-a_j)^{k-1}} v\\
    \vdots\\
    \frac{-1}{(\lambda_0-a_j)^2}v\\
    \frac{-1}{(\lambda_0-a_j)}v
\end{bmatrix}-\begin{bmatrix}
    0 & 0 & \cdots & 0\\
    0 & 0 & \cdots & 0\\
    \vdots & \vdots & \ddots & \vdots \\
    0 & 0 & \cdots &  0 \\
    I & 0 & \cdots & 0 \\
\end{bmatrix} \begin{bmatrix}
    v\\
    \lambda_0v\\
    \vdots\\
    \lambda_0^{m-2}v \\
    \lambda_0^{m-1}v\\
\end{bmatrix} \\ 
& = \lambda_0 \begin{bmatrix}
    \frac{-1}{(\lambda_0-a_j)^k} v\\
    \frac{-1}{(\lambda_0-a_j)^{k-1}} v\\
    \vdots\\
    \frac{-1}{(\lambda_0-a_j)^2}v\\
    \frac{-1}{(\lambda_0-a_j)}v
\end{bmatrix} = \lambda_0 \textbf{Y}_k^{(j)}. 
\end{align*}
This implies 
\begin{equation}\label{Eqn-a}
\mathcal{A}_{j} \mathcal{Y}_j-\mathcal{F}_{j}\mathcal{Y}_0  =\lambda_0 \mathcal{Y}_j  \ \text{for} \ j= 1, 2, \cdots, n. 
\end{equation}\\
Now consider,
\begin{align}
 \displaystyle \sum_{j=1}^{n} \mathcal{B}_j \mathcal{Y}_j+ \mathcal{B}_0\mathcal{Y}_0  
& = \sum_{j=1}^{n} \begin{bmatrix}
    \textbf{B}_{m_j}^{(j)} & \textbf{B}_{m_j -1}^{(j)} & \cdots & \textbf{B}_1^{(j)}  
\end{bmatrix} \begin{bmatrix}
    \textbf{Y}_{m_j}^{(j)}\\
    \textbf{Y}_{m_j -1}^{(j)}\\
    \vdots \\
    \textbf{Y}_1^{(j)}\\
\end{bmatrix}+\mathcal{B}_0\mathcal{Y}_0 \nonumber \\
& =\mathlarger \sum_{j=1}^{n} \mathlarger \sum_{k=1}^{m_j} \begin{bmatrix}
    0 & 0 & \cdots & 0 \\
    0 & 0 & \cdots & 0 \\
    \vdots & \vdots & \ddots & \vdots \\
    B_k^{(j)} & 0 & \cdots & 0 \\
\end{bmatrix} \begin{bmatrix}
    \frac{-1}{(\lambda_0-a_j)^k}v\\
    \frac{-1}{(\lambda_0-a_j)^{k-1}}v\\
    \vdots\\
    \frac{-1}{(\lambda_0-a_j)}v\\
\end{bmatrix} \nonumber \\
&+ \begin{bmatrix}
    0 & I & 0 & \cdots & 0\\
    0 & 0 & I & \cdots & 0\\
    \vdots & \vdots & \vdots & \ddots & \vdots \\    
    C_0 & C_1 & C_2 & \cdots & C_{m-1}  
\end{bmatrix}\begin{bmatrix}
    v\\
    \lambda_0v\\
    \vdots\\
    \lambda_0^{m-1}v
\end{bmatrix} \nonumber \\
& = \sum_{j=1}^{n} \sum_{k=1}^{m_j} \begin{bmatrix}
    0\\
    0\\
    \vdots \\
    \frac{-B_k^{(j)}v}{(\lambda_0-a_j)^k}\\
\end{bmatrix}+\begin{bmatrix}
    \lambda_0v\\
    \lambda_0^2 v\\
    \vdots \\
    C_0v+C_1\lambda_0v+ \cdots + C_{m-1}\lambda_0^{m-1}v
\end{bmatrix} \nonumber\\ 
& =\begin{bmatrix}
    \lambda_0v\\
    \lambda_0^2v\\
    \vdots\\
    \lambda_0^mv\\
\end{bmatrix}
=\lambda_0 \begin{bmatrix}
    v\\
    \lambda_0v\\
    \vdots\\
    \lambda_0^{m-1}v\\
\end{bmatrix}=\lambda_0\mathcal{Y}_0 \label{Eqn-b}.
\end{align}
Therefore, from \eqref{Eqn-a} and \eqref{Eqn-b}, we get $\mathcal{C}_R \mathcal{Y} 
= \lambda_0 \mathcal{Y}$. Thus, if $\lambda_0$ is an eigenvalue of $R(\lambda)$, it 
is an eigenvalue of $\mathcal{C}_R$ also.

\end{proof}

\noindent
The following corollary to Theorem \ref{Thm- eigenvalues- matrix rational function} 
is immediate.

\begin{corollary}\label{Cor-zeros of rational function}
Let $r(\lambda)$ be a scalar complex rational function as in Equation 
\eqref{Eqn-scalar rational function-2} and let $\mathcal{C}_r$ be the associated 
block matrix as described in Equation \eqref{Eqn-block matrix for scalar rational fcn}. 
Then the zeros of $r(\lambda)$ are eigenvalues of $\mathcal{C}_r$.
\end{corollary}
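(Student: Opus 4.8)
The plan is to treat the scalar rational function $r(\lambda)$ of Equation \eqref{Eqn-scalar rational function-2} as a $1 \times 1$ rational matrix and then quote Theorem \ref{Thm- eigenvalues- matrix rational function} verbatim. Writing $R(\lambda) = [\,r(\lambda)\,]$, I first observe that $R(\lambda)$ has exactly the shape of Equation \eqref{Eqn- rational matrix} with $p = 1$: the polynomial part is $P(\lambda) = \lambda^m - c_{m-1}\lambda^{m-1} - \cdots - c_0$ (so that $C_i = c_i$ and the leading coefficient $C_m = 1$ is the $1 \times 1$ identity, in accordance with the standing Assumption), while the principal part contributes coefficients $B_k^{(j)} = -b_k^{(j)}$ multiplying $1/(\lambda - a_j)^k$.

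The first real step is a bookkeeping check: with $p = 1$, the block matrix $\mathcal{C}_R$ of Equation \eqref{Eqn-Block matrix} collapses entry-for-entry onto the matrix $\mathcal{C}_r$ of Equation \eqref{Eqn-block matrix for scalar rational fcn}. Concretely, every $p \times p$ identity block $I$ becomes the scalar $1$, so each $\textbf{A}_k^{(j)}$ becomes the Jordan-like block $A_k^{(j)}$, each $\textbf{F}_k$ becomes $F_k$, the block $\mathcal{B}_0$ becomes the companion-type scalar block whose last row is $(c_0, c_1, \ldots, c_{m-1})$, and the corner entry $B_k^{(j)} = -b_k^{(j)}$ of $\textbf{B}_k^{(j)}$ is exactly the corner entry prescribed for the scalar construction. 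Thus $\mathcal{C}_R$ and $\mathcal{C}_r$ are the same matrix; this is the only place where one must be attentive, in particular about the sign convention that identifies the coefficient $B_k^{(j)}$ with $-b_k^{(j)}$.

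Next I would record the elementary fact that, for a $1 \times 1$ rational matrix, eigenvalues and zeros coincide. Indeed, for any nonzero scalar $v$ and any $\lambda_0$ at which $r$ is finite, the equation $R(\lambda_0)v = r(\lambda_0)\,v = 0$ holds precisely when $r(\lambda_0) = 0$; hence $\lambda_0$ is an eigenvalue of the rational matrix $R(\lambda) = [\,r(\lambda)\,]$ if and only if it is a zero of $r(\lambda)$.

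Finally, I would invoke Theorem \ref{Thm- eigenvalues- matrix rational function} applied to $R(\lambda) = [\,r(\lambda)\,]$, which guarantees that every eigenvalue of $R(\lambda)$ is an eigenvalue of $\mathcal{C}_R$. Combining the three steps, any zero $\lambda_0$ of $r(\lambda)$ is an eigenvalue of $R(\lambda)$, hence of $\mathcal{C}_R = \mathcal{C}_r$, which is the assertion. There is no genuine obstacle here: the substance is entirely contained in the theorem already proved, and the corollary is obtained by specializing to $p = 1$ and matching notation, so the only work is the verification in the second paragraph that the two displayed block matrices literally agree.
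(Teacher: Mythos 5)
Your proposal is correct and follows exactly the route the paper intends: the paper simply declares the corollary immediate from Theorem \ref{Thm- eigenvalues- matrix rational function}, and your specialization to $p=1$ (including the sign check $B_k^{(j)} = -b_k^{(j)}$ that matches $\mathcal{C}_R$ with $\mathcal{C}_r$) is the correct way to make that "immediate" explicit.
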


\subsection{Bounds on the zeros of scalar complex rational functions}\label{sec-3.2}

\

In this section we consider an arbitrary scalar complex rational function 
$r(\lambda)$ with complex coefficients and derive  bounds on its zeros using the 
associated block matrix $\mathcal{C}_r$ given in Equation \eqref{Eqn-block matrix 
for scalar rational fcn}. These bounds are later used in Theorem \ref{Thm summary} 
to obtain bounds on the moduli of eigenvalues of rational matrices.

\medskip
\noindent
In the theorem that follows we give bounds on the zeros of $r(\lambda)$ using the 
norm of the associated block matrix.

\begin{theorem}\label{Thm-bounds for zeros using norms}
Let $r(\lambda)$ be a scalar complex rational function as in Equation 
\eqref{Eqn-scalar rational function-2}. If $\lambda_0$ is a zero of $r(\lambda)$ then,
\begin{enumerate}
\item $\displaystyle  |\lambda_0| \leq \max_{1 \leq j \leq n} \bigg\{1+|a_j|, 
\sum_{j=1}^{n} \sum_{k=1}^{m_j}|b_k^{(j)}|+ \sum_{i=0}^{m-1}|c_i| \bigg\}$.

\item {\small
 \[
|\lambda_0| \leq 
\begin{cases}
\max_{\substack{1 \leq j \leq n \\ 1 \leq i \leq m-1}} \bigg\{|a_j| + |b^{(j)}|, 
1 + |c_i|, |c_0| + n \bigg\}, \text{if each pole is of order $1$}, \\
\max_{\substack{1 \leq j \leq n \\ 1 \leq k \leq m_j \\ 1 \leq i \leq m-1}} 
\bigg\{|a_j| + |b_k^{(j)}|, 1 + |a_j|, 1 + |c_i|, |c_0| + \sum_{j=1}^{n}m_j \bigg\},  
\text{otherwise}.
\end{cases}
\]}

\end{enumerate}
\end{theorem}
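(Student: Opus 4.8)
The plan is to combine Corollary~\ref{Cor-zeros of rational function} with the elementary spectral estimate. By the corollary every zero $\lambda_0$ of $r(\lambda)$ is an eigenvalue of the associated matrix $\mathcal{C}_r$ of Equation~\eqref{Eqn-block matrix for scalar rational fcn}, and for any induced matrix norm one has $|\lambda_0|\le\rho(\mathcal{C}_r)\le ||\mathcal{C}_r||$ (cf.\ \eqref{numerical radius-relationship}). Hence it suffices to bound a conveniently chosen induced norm of $\mathcal{C}_r$: I would prove part~(1) using the maximum row sum norm $||\cdot||_\infty$ and part~(2) using the maximum column sum norm $||\cdot||_1$, the case distinction in part~(2) falling out from the sizes of the Jordan-like blocks.

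For part~(1) I would compute the absolute row sums block row by block row. In a block row $\begin{bmatrix}0 & \cdots & \mathcal{A}_j & \cdots & -\mathcal{F}_j\end{bmatrix}$ each scalar row comes from a single Jordan-like block $A_k^{(j)}$ together with the corresponding row of $-\mathcal{F}_j$: a non-terminal row of $A_k^{(j)}$ contributes $|a_j|+1$ (diagonal plus superdiagonal) and meets a zero row of $\mathcal{F}_j$, whereas the terminal row contributes only $|a_j|$ but meets the single $1$ supplied by $F_k$; either way the row sum equals $1+|a_j|$. The last block row $\begin{bmatrix}\mathcal{B}_n & \cdots & \mathcal{B}_1 & \mathcal{B}_0\end{bmatrix}$ has $m-1$ rows of sum $1$ coming from the identity part of $\mathcal{B}_0$, while its bottom row gathers $c_0,\dots,c_{m-1}$ from $\mathcal{B}_0$ and every $-b_k^{(j)}$ from the blocks $\mathcal{B}_j$, giving $\sum_{i=0}^{m-1}|c_i|+\sum_{j=1}^{n}\sum_{k=1}^{m_j}|b_k^{(j)}|$. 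Taking the maximum and absorbing $1\le 1+|a_j|$ yields exactly the bound in~(1).

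For part~(2) I would read off the absolute column sums instead. In the block column carrying $\mathcal{A}_j$ over $\mathcal{B}_j$, within each aligned pair $\bigl(A_k^{(j)},B_k^{(j)}\bigr)$ the first column collects $|a_j|$ from the diagonal of $A_k^{(j)}$ and the unique nonzero entry $-b_k^{(j)}$ of $B_k^{(j)}$, giving $|a_j|+|b_k^{(j)}|$, while every later column of $A_k^{(j)}$---present only when $k\ge 2$---contributes $|a_j|+1$ and meets a zero column of $B_k^{(j)}$. In the last block column ($-\mathcal{F}_n,\dots,-\mathcal{F}_1$ stacked over $\mathcal{B}_0$) the first column collects one $1$ from each of the $\sum_{j=1}^{n}m_j$ blocks $F_k$ together with $c_0$ from $\mathcal{B}_0$, giving $|c_0|+\sum_{j=1}^{n}m_j$, and columns $2,\dots,m$ give $1+|c_i|$ for $1\le i\le m-1$. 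Every column sum is therefore one of the listed quantities, so $||\mathcal{C}_r||_1$ is dominated by their maximum, which is the ``otherwise'' bound. The case distinction arises because the terms $1+|a_j|$ stem solely from the later columns of a block $A_k^{(j)}$ with $k\ge 2$: if every pole has order $1$ then each $A_1^{(j)}=[a_j]$ is scalar, those columns are absent, the $1+|a_j|$ terms drop out, and $\sum_{j=1}^{n}m_j=n$, producing the first case.

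The argument carries no conceptual difficulty; the work is entirely bookkeeping. The point demanding care is the alignment between the terminal rows (respectively the first columns) of the Jordan-like blocks $A_k^{(j)}$ and the single nonzero entries of $F_k$ and $B_k^{(j)}$, together with the check that the $1$'s coming from the superdiagonals of the $A_k^{(j)}$ and from the identity part of $\mathcal{B}_0$ are each counted exactly once. Once this accounting is in place, both bounds are immediate from $|\lambda_0|\le ||\mathcal{C}_r||$.
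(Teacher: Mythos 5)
Your proposal is correct and follows exactly the paper's argument: invoke Corollary \ref{Cor-zeros of rational function} to place $\lambda_0$ in the spectrum of $\mathcal{C}_r$, then bound $|\lambda_0|$ by $\|\mathcal{C}_r\|_\infty$ for part (1) and $\|\mathcal{C}_r\|_1$ for part (2). The paper simply asserts the values of these norms, whereas you carry out the row- and column-sum bookkeeping explicitly (and correctly, including the alignment of the terminal rows of $A_k^{(j)}$ with $F_k$ and the origin of the $1+|a_j|$ terms in the order-$\geq 2$ case).
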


\begin{proof}
Since $\lambda_0$ is a zero of $r(\lambda)$, by Corollary \ref{Cor-zeros of rational function} 
$\lambda_0$ is an eigenvalue of the corresponding block matrix $\mathcal{C}_r$. One 
can easily see that, 
\begin{align*}
||\mathcal{C}_r||_\infty \displaystyle= \max_{1 \leq j \leq n} \bigg\{1+|a_j|, 
\sum_{j=1}^{n} \sum_{k=1}^{m_j}|b_k^{(j)}|+ \sum_{i=0}^{m-1}|c_i| \bigg\}.
\end{align*}
We also have 
$||\mathcal{C}_r||_1 \displaystyle = \max_{\substack{1 \leq j \leq n \\ 1 \leq i \leq m-1}} 
\bigg\{|a_j|+|b^{(j)}|, 1+|c_i|, |c_0|+ n 
\bigg\}$, if each $a_j$ is a pole of order one and equals 
$\displaystyle \max_{\substack{1 \leq j \leq n \\ 1 \leq k \leq m_j\\ 1 \leq i 
\leq m-1}} \bigg\{|a_j|+|b_k^{(j)}|, 1+|a_j|, 1+|c_i|, |c_0|+ \sum_{j=1}^{n}m_j \bigg\}$ 
otherwise. Hence the proof.
\end{proof}

\medskip
\noindent
As a particular case the result for scalar 
complex polynomials is given in the following corollary.
\begin{corollary}\label{Cor-bounds for zeros using norms}
Let $p(\lambda) = \lambda^m -c_{m-1}\lambda^{m-1} - \dots -c_1 \lambda -c_0$. If 
$\lambda_0$ is a zero of $p(\lambda)$ then,
\begin{enumerate}
\item $|\lambda_0| \leq \max \Big\{1, \displaystyle \sum_{i=0}^{m-1}|c_i| \Big\}$.
\item $|\lambda_0| \leq \max\limits_{1 \leq i \leq m-1} \{|c_0|, 1+|c_i| \}$.
\end{enumerate} 
\end{corollary}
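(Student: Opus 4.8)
The plan is to realise this corollary as the pole-free specialisation of Theorem \ref{Thm-bounds for zeros using norms}. A scalar polynomial $p(\lambda) = \lambda^m - c_{m-1}\lambda^{m-1} - \cdots - c_1\lambda - c_0$ is precisely a scalar rational function of the form \eqref{Eqn-scalar rational function-2} in which no poles occur; that is, the index $n$ is zero and the double sum $\sum_{j=1}^{n}\sum_{k=1}^{m_j} b_k^{(j)}/(\lambda - a_j)^k$ is empty. Under this specialisation every diagonal block $\mathcal{A}_j$, every off-diagonal block $\mathcal{F}_j$ and every row block $\mathcal{B}_j$ with $j \geq 1$ vanishes, so the associated block matrix $\mathcal{C}_r$ of \eqref{Eqn-block matrix for scalar rational fcn} collapses to its single remaining block $\mathcal{B}_0$, the classical $m \times m$ companion matrix of $p(\lambda)$.

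First I would apply Corollary \ref{Cor-zeros of rational function} to conclude that any zero $\lambda_0$ of $p(\lambda)$ is an eigenvalue of $\mathcal{B}_0$. The inequality chain \eqref{numerical radius-relationship} then gives $|\lambda_0| \leq \rho(\mathcal{B}_0) \leq ||\mathcal{B}_0||$ for every induced matrix norm, so it remains only to read off the two norms of interest.

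For the maximum row sum norm, rows $1$ through $m-1$ of $\mathcal{B}_0$ each carry a single superdiagonal entry equal to $1$ and so sum to $1$, while the last row carries $c_0, c_1, \ldots, c_{m-1}$ and sums to $\sum_{i=0}^{m-1}|c_i|$; hence $||\mathcal{B}_0||_\infty = \max\{1, \sum_{i=0}^{m-1}|c_i|\}$, which is part (1). For the maximum column sum norm, the first column sums to $|c_0|$, whereas for $1 \leq i \leq m-1$ the $(i+1)$-th column carries one superdiagonal $1$ together with $c_i$ in the last row and so sums to $1 + |c_i|$; hence $||\mathcal{B}_0||_1 = \max_{1 \leq i \leq m-1}\{|c_0|, 1 + |c_i|\}$, which is part (2).

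There is essentially no obstacle, since everything reduces to reading the two standard induced norms off a companion matrix; the only care needed is the bookkeeping of which entries contribute to each row and column sum. I note that the two bounds so obtained are exactly the classical Cauchy-type bounds for the moduli of polynomial roots, recovered here as a degenerate case of the rational-function estimates.
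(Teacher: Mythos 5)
Your proposal is correct and follows essentially the same route as the paper: the associated block matrix degenerates to the companion matrix $\mathcal{B}_0$, zeros of $p(\lambda)$ are its eigenvalues, and the two bounds are just $\|\mathcal{B}_0\|_\infty$ and $\|\mathcal{B}_0\|_1$ read off entrywise. The only cosmetic difference is that the paper writes the companion matrix down directly rather than deriving it as the $n=0$ specialisation, and your closing remark should credit part (1) to Montel (as the paper does) rather than calling both bounds Cauchy-type.
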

\begin{proof}
The block matrix corresponding to $p(\lambda)$ is $\mathcal{C}_p = \begin{bmatrix}
0 & 1 & 0 & \cdots & 0\\
0 & 0 & 1 & \cdots & 0\\
\vdots & \vdots & \vdots & \ddots & \vdots\\
0 & 0 & 0 & \cdots & 1\\
c_0 & c_1 & c_2 & \cdots & c_{m-1}\\
\end{bmatrix}$. It is easy to check that $||\mathcal{C}_p||_\infty = \max 
\Big\{1, \displaystyle \sum_{i=0}^{m-1}|c_i| \Big\}$ and $||\mathcal{C}_p||_1 = 
\max\limits_{1 \leq i \leq m-1} \{|c_0|, 1+|c_i| \}$. The desired result follows 
from this. 
\end{proof}

In the two results that follow, we use bounds on the numerical radius of the
associated block matrix to give bounds on the zeros of $r(\lambda)$.

\begin{theorem}\label{Thm-bounds on zeros using numerical range-2}
Let $r(\lambda)$ be a scalar complex rational function as in Equation 
\eqref{Eqn-scalar rational function-2}, where the $a_j$'s are nonnegative reals 
for $1 \leq j \leq n$. If $\lambda_0$ is a zero of $r(\lambda)$, then 
\begin{align*}
|\lambda_0| \leq \displaystyle \frac{\alpha+ \beta +\sqrt{(\alpha-\beta)^2 + 
(\gamma+\delta)^2}}{2},
\end{align*}
where $\alpha = \displaystyle \max_{1 \leq j \leq n} \Big\{a_j + 
\cos\Big(\frac{\pi}{m_j+1}\Big) \Big\}, \ \beta = w(\mathcal{B}_0), \ \gamma = 
\displaystyle \sqrt{\sum_{j=1}^{n} m_j}$ and \\ 
$\delta = \displaystyle 
\sqrt{ \sum_{j=1}^{n}\sum_{k=1}^{m_j} {|b_{k}^{(j)}|}^2}$ with $\mathcal{B}_0 = 
\begin{bmatrix}
0 & 1 & 0 & \cdots & 0\\
0 & 0 & 1 & \cdots & 0\\
\vdots & \vdots & \vdots & \ddots & \vdots\\
0 & 0 & 0 & \cdots & 1\\
c_0 & c_1 & c_2 & \cdots & c_{m-1}\\ 
\end{bmatrix}_{m \times m}$.
\end{theorem}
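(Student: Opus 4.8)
The plan is to realize $\lambda_0$ as an eigenvalue of the associated block matrix and then bound its numerical radius. By Corollary \ref{Cor-zeros of rational function}, every zero $\lambda_0$ of $r(\lambda)$ is an eigenvalue of $\mathcal{C}_r$, so by \eqref{numerical radius-relationship} it suffices to show that $w(\mathcal{C}_r)$ does not exceed the claimed bound. I would first view $\mathcal{C}_r$ as a $2 \times 2$ block matrix $\left[\begin{smallmatrix} A & B \\ C & D \end{smallmatrix}\right]$, where $A = \text{diag}(\mathcal{A}_n, \ldots, \mathcal{A}_1)$ is the block-diagonal part, $D = \mathcal{B}_0$ is the bottom-right block, $B$ is the block column formed by $-\mathcal{F}_n, \ldots, -\mathcal{F}_1$, and $C$ is the block row $[\mathcal{B}_n, \ldots, \mathcal{B}_1]$. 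Applying the numerical-radius inequality \eqref{Eqn-numerical radius inq} to this partition reduces the task to computing $w(A)$, $w(D)$, $\|B\|_2$ and $\|C\|_2$ and matching them to $\alpha, \beta, \gamma, \delta$.

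The central computation is $w(A) = \alpha$. Since $A$ is block diagonal and the numerical radius of a block-diagonal matrix equals the maximum of the numerical radii of its blocks, we have $w(A) = \max_{j,k} w(A_k^{(j)})$. Each $A_k^{(j)}$ is an upper-triangular Jordan-type block with $a_j$ on the diagonal; because the numerical radius is invariant under transposition, $w(A_k^{(j)})$ equals the numerical radius of the lower-triangular block treated in Lemma \ref{Lem-numerical radius-Jordan block}. As each $a_j$ is a nonnegative real, that lemma yields the \emph{exact} value $w(A_k^{(j)}) = a_j + \cos\bigl(\tfrac{\pi}{k+1}\bigr)$. Since $\cos\bigl(\tfrac{\pi}{k+1}\bigr)$ increases with $k$, the maximum over $1 \leq k \leq m_j$ is attained at $k = m_j$, and taking the maximum over $j$ gives $w(A) = \max_{1 \leq j \leq n}\bigl\{a_j + \cos\bigl(\tfrac{\pi}{m_j+1}\bigr)\bigr\} = \alpha$. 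The value $w(D) = w(\mathcal{B}_0) = \beta$ is immediate from the definition.

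It remains to evaluate the two spectral norms, and here I would exploit the sparsity of the off-diagonal blocks. Every $F_k$ has a single nonzero entry (equal to $1$) in its first column, so $B$ has only its first column nonzero; thus $B$ is a rank-one matrix of the form $u\, e_1^T$, whose spectral norm is $\|u\|_2 = \sqrt{\sum_{j=1}^n m_j} = \gamma$. Likewise, every $B_k^{(j)}$ has a single nonzero entry $-b_k^{(j)}$ in its last row, so $C$ has only its last row nonzero and equals $e_m\, v^T$ with $\|C\|_2 = \|v\|_2 = \sqrt{\sum_{j=1}^n \sum_{k=1}^{m_j} |b_k^{(j)}|^2} = \delta$. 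Substituting $w(A) = \alpha$, $w(D) = \beta$, $\|B\|_2 = \gamma$ and $\|C\|_2 = \delta$ into \eqref{Eqn-numerical radius inq} and using $|\lambda_0| \leq w(\mathcal{C}_r)$ completes the argument.

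I expect the main obstacle to be the clean evaluation of $w(A)$: one must correctly pass from the upper-triangular blocks $A_k^{(j)}$ to the form in Lemma \ref{Lem-numerical radius-Jordan block} via transpose invariance, justify that the numerical radius of a block-diagonal matrix is the maximum of the blockwise numerical radii, and invoke the monotonicity of $\cos\bigl(\tfrac{\pi}{k+1}\bigr)$ in $k$ to identify the maximizing block. The two norm computations, by contrast, are routine once the rank-one structure of $B$ and $C$ is observed.
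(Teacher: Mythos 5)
Your proposal is correct and follows essentially the same route as the paper: reduce to $|\lambda_0|\leq w(\mathcal{C}_r)$ via Corollary \ref{Cor-zeros of rational function}, apply inequality \eqref{Eqn-numerical radius inq} to the same $2\times 2$ block partition, evaluate $w(A)$ via Lemma \ref{Lem-numerical radius-Jordan block}, and compute the two spectral norms from the sparsity of the off-diagonal blocks (the paper does this by computing $BB^*$ explicitly, which is equivalent to your rank-one observation). Your extra care about transpose invariance and the monotonicity of $\cos\bigl(\tfrac{\pi}{k+1}\bigr)$ in $k$ fills in details the paper leaves implicit.
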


\begin{proof}
$\lambda_0$ being a zero of $r(\lambda)$, it follows from Corollary 
\ref{Cor-zeros of rational function} that $\lambda_0$ is an eigenvalue of the 
corresponding block matrix $\mathcal{C}_r= \begin{bmatrix}
\begin{array}{c c c c|c}
\mathcal{A}_n & 0 & \cdots & 0 & -\mathcal{F}_n \\
0 & \mathcal{A}_{n-1} & \cdots & 0 & -\mathcal{F}_{n-1} \\
\vdots & \vdots & \ddots & \vdots & \vdots \\
0 & 0 & \cdots & \mathcal{A}_1 & -\mathcal{F}_1 \\
\hline
\mathcal{B}_n & \mathcal{B}_{n-1} & \cdots & \mathcal{B}_1 & \mathcal{B}_0
\end{array}
\end{bmatrix} = \begin{bmatrix}
A & C \\
B & D \\
\end{bmatrix}$. By \eqref{Eqn-numerical radius inq} we have, 
\begin{align*}
w(\mathcal{C}_r) & \leq \frac{w(A)+ w(D) +\sqrt{(w(A)-w(D))^2 + (||C||_2+||B||_2)^2}}{2}.
\end{align*} 
We also have from Lemma \ref{Lem-numerical radius-Jordan block} that 
$w(A)= \displaystyle \max_{1 \leq j \leq n} 
\Big\{a_j + \cos\Big(\frac{\pi}{m_j+1}\Big) \Big\}$. 
Note that for $1 \leq j \leq n$ and $1 \leq k \leq m_j$, $B_k^{(j)}{B_k^{(j)}}^* 
= \begin{bmatrix}
0 & 0 & \cdots & 0 \\
0 & 0 & \cdots & 0 \\
\vdots & \vdots & \ddots & \vdots \\
0 & 0 & \cdots & {|b_{k}^{(j)}|}^2
\end{bmatrix}_{m \times m}$. 
Therefore $\mathcal{B}_j\mathcal{B}_j^* = \begin{bmatrix}
B_{m_j}^{(j)} & B_{m_j-1}^{(j)} & \cdots & B_1^{(j)}
\end{bmatrix}\begin{bmatrix}
{B_{m_j}^{(j)}}^* \\ 
{B_{m_j-1}^{(j)}}^* \\
\vdots \\ 
{B_1^{(j)}}^*
\end{bmatrix} = B_{m_j}^{(j)}{B_{m_j}^{(j)}}^*+ B_{m_j-1}^{(j)}{B_{m_j-1}^{(j)}}^* 
+ \cdots + B_1^{(j)}{B_1^{(j)}}^* = \begin{bmatrix}
0 & 0 & \cdots & 0 \\
0 & 0 & \cdots & 0 \\
\vdots & \vdots & \ddots & \vdots \\
0 & 0 & \cdots & \displaystyle \sum_{k=1}^{m_j} {|b_{k}^{(j)}|}^2
\end{bmatrix}_{m \times m}$. Then $B B^* = \begin{bmatrix}
\mathcal{B}_n & \mathcal{B}_{n-1} & \cdots & \mathcal{B}_1
\end{bmatrix} \begin{bmatrix}
\mathcal{B}_n^* \\ 
\mathcal{B}_{n-1}^* \\
\vdots \\
\mathcal{B}_1^*
\end{bmatrix}= \mathcal{B}_n\mathcal{B}_n^*+ \cdots + \mathcal{B}_1\mathcal{B}_1^* 
=\begin{bmatrix}
0 & 0 & \cdots & 0 \\
0 & 0 & \cdots & 0 \\
\vdots & \vdots & \ddots & \vdots \\
0 & 0 & \cdots & \displaystyle \sum_{j=1}^{n}\sum_{k=1}^{m_j} {|b_{k}^{(j)}|}^2
\end{bmatrix}$.
Thus $||B||_2 = \sqrt{\displaystyle \sum_{j=1}^{n}\sum_{k=1}^{m_j} {|b_{k}^{(j)}|}^2}$. 
Similarly, $||C||_2= \displaystyle \sqrt{\sum_{j=1}^{n}\sum_{k=1}^{m_j}1} = 
\sqrt{\sum_{j=1}^{n} m_j}$. This completes the proof.
\end{proof}

\noindent
The following corollary is a particular case for scalar complex 
polynomials. Note that this bound is the same as the bound given in Theorem $7$ of 
\cite{Abdurakhmanov}.

\begin{corollary}\label{Cor-bounds on zeros using numerical range-2}
Let $p(\lambda) = \lambda^m -c_{m-1}\lambda^{m-1} - \dots -c_1 \lambda -c_0$. If 
$\lambda_0$ is a zero of $p(\lambda)$ then 
\begin{equation}
|\lambda_0| \leq \frac{1}{2} \left(\cos \tfrac{\pi}{m}+|c_{m-1}|+ 
\sqrt{(\cos \tfrac{\pi}{m}-|c_{m-1}|)^2 +\left(1+ \sqrt{\sum_{i=0}^{m-2}|c_i|^2} 
\right)^2} \right)
\end{equation} 
\end{corollary}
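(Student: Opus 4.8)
The plan is to recognize that a polynomial is the degenerate case of Equation \eqref{Eqn-scalar rational function-2} in which the rational part is absent, so that the associated block matrix $\mathcal{C}_r$ collapses to the single companion-type block $\mathcal{B}_0$. Since $\lambda_0$ is a zero of $p(\lambda)$, Corollary \ref{Cor-zeros of rational function} (equivalently, the fact that $\mathcal{B}_0$ is the companion matrix of $p(\lambda)$) shows that $\lambda_0$ is an eigenvalue of $\mathcal{B}_0$, and the chain of inequalities \eqref{numerical radius-relationship} gives $|\lambda_0| \le \rho(\mathcal{B}_0) \le w(\mathcal{B}_0)$. Thus the whole problem reduces to bounding $w(\mathcal{B}_0)$.

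To do so, I would partition $\mathcal{B}_0$ into the $2\times 2$ block form $\mathcal{B}_0 = \begin{bmatrix} A & C \\ B & D \end{bmatrix}$ that isolates the last row and last column: take $A$ to be the $(m-1)\times(m-1)$ leading principal submatrix, $D = [c_{m-1}]$, $C$ the first $m-1$ entries of the last column, and $B$ the first $m-1$ entries of the last row. From the explicit form of $\mathcal{B}_0$ one reads off that $A$ is the nilpotent Jordan block with ones on the superdiagonal, $C = [0, \ldots, 0, 1]^T$, and $B = [c_0, c_1, \ldots, c_{m-2}]$. Hence $w(D) = |c_{m-1}|$, $\|C\|_2 = 1$, and $\|B\|_2 = \sqrt{\sum_{i=0}^{m-2}|c_i|^2}$.

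The key step is evaluating $w(A)$. The block $A$ is unitarily similar, via the reversal permutation, to the lower-triangular nilpotent Jordan block of size $m-1$ appearing in Lemma \ref{Lem-numerical radius-Jordan block} with $a = 0$; since the numerical radius is invariant under unitary similarity, Lemma \ref{Lem-numerical radius-Jordan block} yields $w(A) = \cos\frac{\pi}{m}$, with equality because $a = 0$ is a nonnegative real. Substituting $w(A)$, $w(D)$, $\|C\|_2$ and $\|B\|_2$ into the numerical radius inequality \eqref{Eqn-numerical radius inq} gives exactly
\[
w(\mathcal{B}_0) \le \frac{1}{2}\left(\cos\tfrac{\pi}{m} + |c_{m-1}| + \sqrt{\left(\cos\tfrac{\pi}{m} - |c_{m-1}|\right)^2 + \left(1 + \sqrt{\sum_{i=0}^{m-2}|c_i|^2}\right)^2}\right),
\]
and combining with $|\lambda_0| \le w(\mathcal{B}_0)$ completes the proof.

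I expect the only genuinely delicate point to be the computation $w(A) = \cos\frac{\pi}{m}$: one must match the orientation of the off-diagonal ones (superdiagonal in $\mathcal{B}_0$ versus subdiagonal in Lemma \ref{Lem-numerical radius-Jordan block}) and argue that the reversal permutation is a unitary similarity, so that the numerical radius is preserved. The remaining identifications of $C$, $B$ and $D$ are direct reads from the matrix, and the final substitution is routine.
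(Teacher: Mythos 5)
Your proposal is correct and follows essentially the same route as the paper: partition $\mathcal{C}_p$ so as to isolate the last row and column, compute $w(A)=\cos\frac{\pi}{m}$ via Lemma \ref{Lem-numerical radius-Jordan block}, read off $w(D)$, $\|B\|_2$, $\|C\|_2$, and substitute into inequality \eqref{Eqn-numerical radius inq}. The paper states this in one line without spelling out the block identifications or the orientation issue for the Jordan block, so your write-up is simply a more detailed version of the same argument.
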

\begin{proof}
The corresponding block matrix of $p(\lambda)$ can be partitioned as 
\begin{equation}
\mathcal{C}_p = \begin{bmatrix}
\begin{array}{c c c c|c}
0 & 1 & 0 & \cdots & 0\\
0 & 0 & 1 & \cdots & 0\\
\vdots & \vdots & \vdots & \ddots & \vdots\\
0 & 0 & 0 & \cdots & 1\\
\hline
c_0 & c_1 & c_2 & \cdots & c_{m-1}\\
\end{array}
\end{bmatrix} = \begin{bmatrix}
A & C \\
B & D
\end{bmatrix}.
\end{equation} 
The result then follows from the inequality \eqref{Eqn-numerical radius inq}.  
\end{proof}

We now compare the bounds obtained in Corollaries \ref{Cor-bounds for zeros using norms} 
and \ref{Cor-bounds on zeros using numerical range-2} with the bounds available in 
the literature. There are several bounds for the zeros of scalar complex polynomials 
of the form $p(\lambda)=\lambda^m -c_{m-1}\lambda^{m-1} - \dots - c_1\lambda -c_0$, 
where $c_i \in \mathbb{C}$, $1 \leq i \leq m-1$. We list a few of them here. Let 
$\lambda_0$ be any zero of $p(\lambda)$.

\begin{enumerate}
\item The following bound is due to Cauchy \cite{Horn-Johnson}:
\begin{equation}\label{eq-Cauchy}
|\lambda_0| \leq 1 + \max \{|c_i| : 0 \leq i \leq m-1 \}.
\end{equation}
\item The following bound is due to Carmichael and Mason \cite{Horn-Johnson}:
\begin{equation}\label{eq-Carmichael and Mason}
|\lambda_0| \leq \displaystyle \sqrt{1+ \sum_{i=0}^{m-1}|c_i|^2}.
\end{equation}
\item The bound due to Montel \cite{Horn-Johnson} says:
\begin{equation}\label{eq-Montel}
|\lambda_0| \leq \max \Bigg\{1, \sum_{i=0}^{m-1} |c_i| \Bigg\}.
\end{equation}
\item More recently Frakis, Kittaneh and Soltani \cite{Abdelkader-Faud-Soumia} obtained 
the following estimate:
\begin{smaller}
\begin{equation}\label{eq-Frakis, Kittaneh and Soltani}
|\lambda_0| \leq  \frac{1}{4} \left(2+ \sqrt{2}w(|A|+i|A^*|)+ \sqrt{\Big(\sqrt{2}w(|A|+i|A^*|) 
-2 \Big)^2 +4 \left(1+\sqrt{\sum_{i=0}^{m-3}|c_i|}\right)^2} \right),
\end{equation}
\end{smaller}
where $A = \begin{bmatrix}
c_{m-1} & c_{m-2} \\
1 & 0
\end{bmatrix}$, $|A| = (A^*A)^{\frac{1}{2}}$ and $w(A)$ denotes numerical radius of 
matrix $A$.
\item Another estimate due Cauchy \cite{Cauchy} which is a direct consequence of 
Rouch\'e's theorem is the following:
\begin{equation}\label{eq-Cauchy-Rouche}
|\lambda_0| \leq \eta,
\end{equation}
where $\eta$ is the unique positive zero of the real polynomial $u(x) = x^m - 
|c_{m-1}|x^{m-1} - \cdots - |c_1| x -|c_0|$.
\item Using Holder's inequality, Aziz and Rather \cite{Aziz-Rather} proved the following:
\begin{equation}\label{eq-Aziz-Rather}
|\lambda_0| \leq (m+1)^{\frac{1}{q}} \Bigg \{ \displaystyle \sum_{j=0}^{m} \bigg| 
\frac{tc_j-c_{j-1}}{t^{m-j}}\bigg|^p \Bigg\} ^{\frac{1}{p}}
\end{equation}
with $p>1, q>1, \frac{1}{p}+\frac{1}{q}=1, t>0$ and $a_{-1} =0$.
\end{enumerate}

\noindent
Note that the bound given in $(1)$ of Corollary \ref{Cor-bounds for zeros using norms} 
is the same as the bound due to Montel given in Inequality \eqref{eq-Montel}. We now 
show that depending on the modulus of the constant $c_0$, the bound obtained in $(2)$ 
of Corollary \ref{Cor-bounds for zeros using norms} is less than or equal to the bound 
due to Cauchy given in Inequality \eqref{eq-Cauchy}.

\noindent
Case $1$: If $\max\limits_{0 \leq i \leq m-1} \{|c_i| \} = |c_0|$, 
then 
\begin{center}
$\max\limits_{1 \leq i \leq m-1} \{|c_0|, 1+|c_i| \} \leq 1+|c_0| = 1+
\max\limits_{0 \leq i \leq m-1} \{|c_i| \}$.
\end{center}
Case $2$: If $\max\limits_{0 \leq i \leq m-1} \{|c_i| \} = |c_k|$ for $k \neq 0$, 
then 
\begin{center}
$\max\limits_{1 \leq i \leq m-1} \{|c_0|, 1+|c_i| \} = 1+|c_k| = 1+
\max\limits_{0 \leq i \leq m-1} \{|c_i| \}$.
\end{center}
In general one cannot determine which method will give the best bound among the bounds 
discussed above for scalar complex polynomials. However, specific polynomials are 
considered and bounds are compared in Example \ref{ex-polynomial-1} of Section \ref{sec-4}.

\subsection{Bounds on the moduli of eigenvalues $R(\lambda)$ using a scalar real rational function} 

\

In this section, given a rational matrix $R(\lambda)$, we associate a scalar 
real rational function $q(x)$ whose coefficients are composed of the norms of the 
coefficient matrices of $R(\lambda)$. We prove that a particular zero of $q(x)$ 
is a bound on the moduli of eigenvalues of $R(\lambda)$. We also make use of 
bounds on the zeros of scalar rational functions obtained in Section \ref{sec-3.2} 
to give bounds on the moduli of eigenvalues of $R(\lambda)$. We begin with the 
following lemma.
 
\begin{lemma}\label{Lem-zeros of rational function}
Let $q(x) = x^m-\alpha_{m-1}x^{m-1}-\cdots - \alpha_1 x - \alpha_0 - \displaystyle 
\sum_{j=1}^{n} \sum_{k=1}^{m_j} \frac{\beta_k^{(j)}}{(x-\gamma_j)^k}$ be a scalar real 
rational function, where the $\beta_k^{(j)}$'s are positive real numbers, $\alpha_i$'s 
and $\gamma_j$'s are nonnegative real numbers such that $\gamma_1 < \gamma_2 < \cdots 
< \gamma_n$. Then $q(x)$ has a zero $R$ such that $ \gamma_n < R$.
\end{lemma}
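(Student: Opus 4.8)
The plan is to prove existence of the zero $R$ by a sign-change argument via the Intermediate Value Theorem, applied to $q$ on the open interval $(\gamma_n, \infty)$. The first thing I would observe is that $q$ is continuous on this interval: its only singularities are the poles $\gamma_1 < \gamma_2 < \cdots < \gamma_n$, all of which lie at or below $\gamma_n$, so on $(\gamma_n, \infty)$ the function $q$ is a bona fide rational function with no poles, hence continuous. It then suffices to show that $q$ takes a negative value near the left endpoint and a positive value for large $x$.

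Next I would examine the one-sided limit as $x \to \gamma_n^+$. The summands attached to the largest pole, namely $\frac{\beta_k^{(n)}}{(x-\gamma_n)^k}$ for $1 \le k \le m_n$, each tend to $+\infty$ because $\beta_k^{(n)} > 0$ and $(x-\gamma_n)^k \to 0^+$, with the $k = m_n$ term dominating. Every other ingredient of $q$ stays bounded as $x \to \gamma_n^+$: the polynomial part is finite, and the summands with $\gamma_j < \gamma_n$ remain finite since $x - \gamma_j \to \gamma_n - \gamma_j > 0$. As these unbounded singular terms enter with a negative sign, I conclude $\lim_{x \to \gamma_n^+} q(x) = -\infty$. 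For the other end, since the standing assumption gives $m \ge 1$, the leading term $x^m$ dominates all lower-order polynomial terms while each $\frac{\beta_k^{(j)}}{(x-\gamma_j)^k} \to 0$, so $\lim_{x \to +\infty} q(x) = +\infty$. Continuity on $(\gamma_n,\infty)$ together with these two limits produces, by the Intermediate Value Theorem, a point $R \in (\gamma_n, \infty)$ with $q(R) = 0$.

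The calculation is elementary, so the only place demanding care is the verification that the behavior at $\gamma_n^+$ is genuinely driven to $-\infty$ rather than cancelled by the polynomial part or by the contributions of the smaller poles. This is exactly what the boundedness of those other terms near $x = \gamma_n$ guarantees: a finite quantity cannot offset one that is unbounded, so no cancellation is possible and the sign of the limit is settled. I would note that positivity of the $\beta_k^{(j)}$ is the essential hypothesis here, while the remaining sign conditions on $\alpha_i$ and $\gamma_j$ are not needed for existence; since the statement only asserts that some zero exceeds $\gamma_n$, establishing the sign change is all that is required, and I would not pursue uniqueness or a sharper location of $R$.
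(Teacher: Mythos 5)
Your proof is correct and takes essentially the same route as the paper's: continuity of $q$ on $(\gamma_n,\infty)$, the one-sided limit $\lim_{x\to\gamma_n^+}q(x)=-\infty$ forced by the positive $\beta_k^{(n)}$ terms entering with a negative sign while all other terms stay bounded, the limit $+\infty$ as $x\to+\infty$, and the intermediate value theorem. Your added observation that only the positivity of the $\beta_k^{(j)}$'s is essential is accurate but not needed for the statement as given.
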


\begin{proof}
Note that $q(x)$ is continuous in the open interval $(\gamma_n, + \infty)$. Consider
\begin{align*}
\displaystyle \lim_{x \rightarrow \gamma_n^+}q(x) & = \displaystyle 
\lim_{x \rightarrow \gamma_n^+}\Bigg\{ x^m-\alpha_{m-1}x^{m-1}-\cdots - 
\alpha_1 x - \alpha_0 - \displaystyle 
\sum_{j=1}^{n} \sum_{k=1}^{m_j} \frac{\beta_k^{(j)}}{(x-\gamma_j)^k} \Bigg\}\\
& = \gamma_n^m- \sum_{i=1}^{m-1} \alpha_{i}\gamma_n^{i}-\displaystyle 
\sum_{j=1}^{n-1} \sum_{k=1}^{m_j} \frac{\beta_k^{(j)}}{(\gamma_n-\gamma_j)^k} - 
\displaystyle \lim_{x \rightarrow \gamma_n^+} \sum_{k=1}^{m_n} 
\frac{\beta_k^{(n)}}{(x-\gamma_n)^k} \\
& = - \infty
\end{align*} 
and 
\begin{align*}
\displaystyle 
\lim_{x \rightarrow + \infty} q(x) & = \displaystyle \lim_{x \rightarrow +
\infty}\Bigg\{ x^m-\alpha_{m-1}x^{m-1}-\cdots - \alpha_1 x - \alpha_0 - \displaystyle 
\sum_{j=1}^{n} \sum_{k=1}^{m_j} \frac{\beta_k^{(j)}}{(x-\gamma_j)^k} \Bigg\}\\
& = \displaystyle \lim_{x \rightarrow +\infty} \Bigg \{x^m \bigg( 1-\frac{a_{m-1}}{x} 
- \cdots -\frac{a_{1}}{x^{m-1}} -\frac{a_{0}}{x^m}\bigg) - \displaystyle 
\sum_{j=1}^{n} \sum_{k=1}^{m_j} \frac{\beta_k^{(j)}}{(x-\gamma_j)^k} \Bigg\} \\
& = +\infty.
\end{align*}
Therefore, by the intermediate value theorem $q(x)$ has a zero in $(\gamma_n, + \infty)$. 
This completes the proof.
\end{proof}

\medskip

\begin{theorem}\label{Thm-eigenvalue bounds-matrix rational function}
Let $R(\lambda)$ be a $p \times p$ rational matrix as in Equation 
\eqref{Eqn- rational matrix}. Associate a scalar real rational function to 
$R(\lambda)$ as follows:
\begin{equation}\label{Eqn-associated rational function}
q(x) = x^m-||C_{m-1}||x^{m-1}-\cdots - ||C_1|| x - ||C_0|| - \displaystyle 
\sum_{j=1}^{n} \sum_{k=1}^{m_j} \frac{||B_k^{(j)}||}{(x- |a_j|)^k},
\end{equation}
where $||\cdot ||$ is any induced matrix norm. If $\lambda_0$ is an eigenvalue of 
$R(\lambda)$ then $|\lambda_0| \leq R$, where $R$ is a real zero of $q(x)$ such that 
$|a_j| < R$ for all $j=1,\cdots,n$.
\end{theorem}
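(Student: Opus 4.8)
The plan is to turn the eigenvalue equation into a scalar inequality governed by $q$. I would begin with an eigenpair $(\lambda_0,v)$, $v\neq 0$, so that $R(\lambda_0)v=0$; since $\lambda_0$ is an eigenvalue, no entry of $R(\lambda_0)$ is infinite, so $\lambda_0\neq a_j$ for every $j$ and each term below is defined. Using $C_m=I_p$ and Equation \eqref{Eqn- rational matrix}, I would isolate the leading term as
\[
\lambda_0^m v=\sum_{i=0}^{m-1}C_i\lambda_0^i v-\sum_{j=1}^{n}\sum_{k=1}^{m_j}\frac{B_k^{(j)}v}{(\lambda_0-a_j)^k},
\]
then take the induced norm $||\cdot||$ of both sides, apply $||Av||\le ||A||\,||v||$ together with the triangle inequality, and cancel $||v||>0$. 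Writing $t=|\lambda_0|$, this produces
\[
t^m\le \sum_{i=0}^{m-1}||C_i||\,t^i+\sum_{j=1}^{n}\sum_{k=1}^{m_j}\frac{||B_k^{(j)}||}{|\lambda_0-a_j|^k}.
\]

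The next step is to pass from $|\lambda_0-a_j|$ to $t-|a_j|$, and this is where a case split is forced. If $t\le \max_{1\le j\le n}|a_j|$, then $t\le\max_j|a_j|<R$ and there is nothing left to prove. Otherwise $t>\max_j|a_j|$, so $t-|a_j|>0$ for every $j$; the reverse triangle inequality $|\lambda_0-a_j|\ge|\lambda_0|-|a_j|=t-|a_j|$ then gives $|\lambda_0-a_j|^{-k}\le (t-|a_j|)^{-k}$. Feeding these into the inequality above and comparing with the definition \eqref{Eqn-associated rational function} of $q$, I would obtain $q(t)\le 0$.

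It remains to read off $t\le R$ from $q(t)\le 0$, for which I would study the sign of $q$ on $(\max_j|a_j|,\infty)$. Existence of a zero $R>\max_j|a_j|$ comes from Lemma \ref{Lem-zeros of rational function}, applied after grouping the pole terms by their distinct moduli so that its hypotheses hold. For $x>\max_j|a_j|$ I would factor $q(x)=x^m\bigl(1-\psi(x)\bigr)$ with
\[
\psi(x)=\sum_{i=0}^{m-1}\frac{||C_i||}{x^{m-i}}+\sum_{j=1}^{n}\sum_{k=1}^{m_j}\frac{||B_k^{(j)}||}{x^m(x-|a_j|)^k}.
\]
Each summand of $\psi$ has a positive, increasing denominator on this interval, so $\psi$ is strictly decreasing; hence $q$ has a unique zero $R$ there with $q<0$ on $(\max_j|a_j|,R)$ and $q>0$ on $(R,\infty)$. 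Since $q(t)\le 0$ with $t>\max_j|a_j|$ is equivalent to $\psi(t)\ge 1=\psi(R)$, strict monotonicity forces $t\le R$, and combining with the trivial case gives $|\lambda_0|=t\le R$. I expect the main obstacle to be precisely the replacement of $|\lambda_0-a_j|$ by $t-|a_j|$: it is valid only beyond all the pole moduli, which is exactly why $R$ must be sought with $R>|a_j|$, and it must be paired with the monotonicity of $\psi$ to guarantee that $R$ is a genuine upper bound and not merely one of several sign changes of $q$.
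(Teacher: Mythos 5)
Your argument is correct and rests on the same key estimate as the paper's proof---take the induced norm in the eigenvalue equation and replace $|\lambda_0-a_j|$ by $|\lambda_0|-|a_j|$ via the reverse triangle inequality---but you package it in the opposite logical direction, and this costs you one extra ingredient. The paper argues contrapositively: it fixes the zero $R$ supplied by Lemma \ref{Lem-zeros of rational function}, rewrites $q(R)=0$ as the identity $1=R^{-1}\|C_{m-1}\|+\cdots+R^{-m}\|C_0\|+R^{-m}\sum_{j,k}\|B_k^{(j)}\|/(R-|a_j|)^k$, and shows that for every $\lambda_0$ with $|\lambda_0|>R$ and every unit vector $v$ one has $\|R(\lambda_0)v\|\ge|\lambda_0|^m(1-\text{something}<1)>0$, so no such $\lambda_0$ can be an eigenvalue; the comparison with $R$ is done termwise against this identity, and no global sign analysis of $q$ is needed. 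You go forward from $R(\lambda_0)v=0$ to $q(|\lambda_0|)\le 0$, and must then convert this sign condition into $|\lambda_0|\le R$; your strict monotonicity of $\psi$ on $(\max_j|a_j|,\infty)$ supplies exactly what the paper's evaluation at $R$ gives implicitly, and as a by-product it shows the zero beyond all pole moduli is unique---a fact the paper neither claims nor needs. You are also more careful on two points the paper glosses over: the trivial case $|\lambda_0|\le\max_j|a_j|$ (silently absorbed by the paper's contrapositive) and the need to group poles of equal modulus before invoking Lemma \ref{Lem-zeros of rational function}, whose hypotheses require strictly increasing $\gamma_j$. The only caveat common to both arguments is that the strict inequality (respectively, strict decrease of $\psi$) needs at least one nonzero coefficient norm; this holds because each $a_j$ is a pole of order $m_j$, so $\|B_{m_j}^{(j)}\|>0$.
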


\begin{proof}
Without loss of generality assume that $|a_1| < |a_2| < \cdots < |a_n|$. By Lemma 
\ref{Lem-zeros of rational function}, $q(x)$ has a real zero $R$ such that $|a_j| < R$ 
for all $j=1,\cdots,n$. We 
then have 
\begin{align*}
0 & = q(R)=R^m - ||C_{m-1}|| R^{m-1} - \cdots - ||C_1|| R -||C_0||-\displaystyle 
\sum_{j=1}^{n} \sum_{k=1}^{m_j} \frac{||B_k^{(j)}||}{(R- |a_j|)^k}.
\end{align*} 
This implies $R^m  = ||C_{m-1}|| R^{m-1} + \cdots + ||C_1|| R + ||C_0||+\displaystyle 
\sum_{j=1}^{n} \sum_{k=1}^{m_j} \frac{||B_k^{(j)}||}{(R- |a_j|)^k}$. Therefore
\begin{align*}
1 = R^{-1}||C_{m-1}||  + \cdots + R^{-(m-1)} ||C_1|| + R^{-m} ||C_0||
+\displaystyle R^{-m} \sum_{j=1}^{n} \sum_{k=1}^{m_j} \frac{||B_k^{(j)}||}{(R- |a_j|)^k}.
\end{align*}
Suppose $\lambda_0 \in \mathbb{C}$ is such that $|\lambda_0| > R$. Then 
$|\lambda_0-a_j|\geq |\lambda_0|-|a_j|>R-|a_j|$. Since $|a_j| < R$, we have 
$\displaystyle \frac{1}{(|\lambda_0-a_j|)^k} < \frac{1}{(R-|a_j|)^k}$ for all 
$j=1,2,\cdots,n$ and $k=1,2,\cdots,m_j$. For any unit vector $v \in \mathbb{C}^n$ 
consider, 
\begin{align*}
& \Big|\Big| - \lambda_0^{-1}C_{m-1}v - \cdots - \lambda_0^{-(m-1)} C_1 v - 
\lambda_0^{-m} C_0 v+ \displaystyle \lambda_0^{-m} \sum_{j=1}^{n} \sum_{k=1}^{m_j} 
\frac{B_k^{(j)}}{(\lambda_0-a_j)^k} v \Big|\Big|\\
& \leq |\lambda_0|^{-1} ||C_{m-1}|| + \cdots + 
|\lambda_0|^{-m} ||C_0|| + \displaystyle |\lambda_0|^{-m} \sum_{j=1}^{n} \sum_{k=1}^{m_j} 
\frac{||B_k^{(j)}||}{(|\lambda_0-a_j|)^k} \\
& < R^{-1} ||C_{m-1}|| + \cdots + R^{-(m-1)} ||C_1|| + R^{-m} ||C_0|| + \displaystyle 
R^{-m} \sum_{j=1}^{n} \sum_{k=1}^{m_j} \frac{||B_k^{(j)}||}{(R-|a_j|)^k} \\
& =1.
\end{align*}
Now consider 
\begin{align*}
& ||R(\lambda_0)v|| \\
& =\Big|\Big| \lambda_0^m v- C_{m-1}\lambda_0^{m-1}v - \cdots -C_1 \lambda_0 v - 
C_0 v+ \displaystyle \sum_{j=1}^{n} \sum_{k=1}^{m_j} 
\frac{B_k^{(j)}}{(\lambda_0-a_j)^k} v\Big|\Big| \\
& = |\lambda_0|^m\Big|\Big|v- \lambda_0^{-1}C_{m-1}v - \cdots - 
\lambda_0^{-m} C_0 v+ \displaystyle \lambda_0^{-m} 
\sum_{j=1}^{n} \sum_{k=1}^{m_j} \frac{B_k^{(j)}}{(\lambda_0-a_j)^k} v \Big|\Big| \\
& \geq |\lambda_0|^m\Big(1-\Big|\Big| - \lambda_0^{-1}C_{m-1}v - \cdots - 
\lambda_0^{-m} C_0 v+ \displaystyle \lambda_0^{-m} 
\sum_{j=1}^{n} \sum_{k=1}^{m_j} \frac{B_k^{(j)}}{(\lambda_0-a_j)^k} v \Big|\Big| 
\Big)\\
& > |\lambda_0|^m(1-1) = 0.
\end{align*}
Therefore if $\lambda_0\in \mathbb{C}$ is an eigenvalue of $R(\lambda)$, then 
$|\lambda_0| \leq R$. This completes the proof.
\end{proof}

\medskip
In the following theorem we use Theorems \ref{Thm-bounds for zeros using norms} and 
\ref{Thm-bounds on zeros using numerical range-2} to obtain bounds on the moduli of 
eigenvalues of a rational matrix.

\begin{theorem}\label{Thm summary}
Let $R(\lambda)$ be a $p \times p$ rational matrix as in Equation 
\eqref{Eqn- rational matrix}. If $\lambda_0$ be an eigenvalue of $R(\lambda)$, then,

\begin{enumerate}
\item $\displaystyle  |\lambda_0| \leq \max_{1 \leq j \leq n} \bigg\{1+|a_j|, 
\sum_{j=1}^{n} \sum_{k=1}^{m_j}||B_k^{(j)}||+ \sum_{i=0}^{m-1}||C_i|| \bigg\}$.

\item {\small \[
|\lambda_0| \leq 
\begin{cases}
  \displaystyle\max_{\substack{1 \leq j \leq n \\ 1 \leq i \leq m-1}} \bigg\{|a_j| 
  + \big\|B^{(j)}\big\|, 1 + \big\|C_i\big\|, \big\|C_0\big\| + n \bigg\}, 
  \text{if each pole is of order $1$}, \\
  \displaystyle\max_{\substack{1 \leq j \leq n \\ 1 \leq k \leq m_j \\ 1 \leq i 
  \leq m-1}} \bigg\{|a_j| + \big\|B_k^{(j)}\big\|, 1 + |a_j|, 1 + \big\|C_i\big\|, 
  \big\|C_0\big\| + \sum_{j=1}^{n}m_j \bigg\}, \text{otherwise}.
\end{cases}
\]}

\

\item $|\lambda_0| \leq \displaystyle \frac{\alpha+ \beta +\sqrt{(\alpha-\beta)^2 + 
(\gamma+\delta)^2}}{2}$,

where $\alpha = \displaystyle \max_{1 \leq j \leq n} \Big\{|a_j| + 
\cos\Big(\frac{\pi}{m_j+1}\Big) \Big\}$, $\beta = w(\mathcal{B}_0)$, $\gamma =
\sqrt{\displaystyle \sum_{j=1}^{n} m_j}$ and $\delta = \sqrt{ \displaystyle 
\sum_{j=1}^{n}\sum_{k=1}^{m_j} {||B_{k}^{(j)}||}^2}$ with $\mathcal{B}_0 = 
\begin{bmatrix}
0 & 1  & \cdots & 0\\
0 & 0 & \cdots & 0\\
\vdots  & \vdots & \ddots & \vdots\\
0 & 0 & \cdots & 1\\
||C_0|| & ||C_1|| & \cdots & ||C_{m-1}||\\ 
\end{bmatrix}$.
\end{enumerate}
\end{theorem}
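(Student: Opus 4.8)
The plan is to observe that the associated scalar real rational function $q(x)$ appearing in Theorem~\ref{Thm-eigenvalue bounds-matrix rational function} is itself a scalar rational function of exactly the form \eqref{Eqn-scalar rational function-2} studied in Section~\ref{sec-3.2}, so that all three bounds reduce to direct applications of the zero-bounds already established there. First I would invoke Theorem~\ref{Thm-eigenvalue bounds-matrix rational function}: if $\lambda_0$ is an eigenvalue of $R(\lambda)$, then $|\lambda_0| \le R$, where $R$ is a real zero of
\[
q(x) = x^m - ||C_{m-1}||x^{m-1} - \cdots - ||C_1||x - ||C_0|| - \sum_{j=1}^{n}\sum_{k=1}^{m_j}\frac{||B_k^{(j)}||}{(x-|a_j|)^k}
\]
satisfying $|a_j| < R$ for all $j$. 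The key point is that $q$ has precisely the shape of the function $r(\lambda)$ in \eqref{Eqn-scalar rational function-2} under the identifications $c_i \mapsto ||C_i||$, $b_k^{(j)} \mapsto ||B_k^{(j)}||$ and $a_j \mapsto |a_j|$, and that all of these are nonnegative reals, so that $|c_i| = ||C_i||$, $|b_k^{(j)}| = ||B_k^{(j)}||$ and the poles $|a_j|$ are nonnegative. Since $R$ is in particular a zero of $q$, any upper bound on the moduli of the zeros of $q$ bounds $R$, and hence bounds $|\lambda_0|$.

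With this reduction in hand, parts (1) and (2) follow by applying parts (1) and (2) of Theorem~\ref{Thm-bounds for zeros using norms} to $q$ and performing the substitution above; the nonnegativity of the coefficients makes the translation of the $\max$ expressions verbatim, with the distinction between order-one poles and higher-order poles carried over unchanged. For part (3) I would apply Theorem~\ref{Thm-bounds on zeros using numerical range-2} to $q$: its hypothesis that the poles be nonnegative reals is met automatically, since the poles of $q$ are the moduli $|a_j|$. The quantities $\alpha$, $\beta$, $\gamma$, $\delta$ then read off directly, with $\alpha = \max_{1 \le j \le n}\{|a_j| + \cos(\tfrac{\pi}{m_j+1})\}$, $\delta = \sqrt{\sum_{j}\sum_{k} ||B_k^{(j)}||^2}$, and $\mathcal{B}_0$ the companion-type block built from $||C_0||, \dots, ||C_{m-1}||$.

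The only genuine point requiring care is ensuring that $q$ legitimately falls under the framework of Section~\ref{sec-3.2}, namely that its poles $|a_j|$ can be taken as \emph{distinct} nonnegative reals, so that the associated block matrix $\mathcal{C}_r$ and the existence statement of Lemma~\ref{Lem-zeros of rational function} apply. I would handle this exactly as in the proof of Theorem~\ref{Thm-eigenvalue bounds-matrix rational function}, by assuming $|a_1| < \cdots < |a_n|$ and merging the numerators of any terms whose poles happen to share a common modulus. Once this bookkeeping is settled, the three estimates are immediate consequences of the corresponding zero-bounds, and no further analytic estimation is needed; the work is entirely in the translation step rather than in any new inequality.
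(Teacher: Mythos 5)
Your proposal is correct and follows essentially the same route as the paper: invoke Theorem \ref{Thm-eigenvalue bounds-matrix rational function} to get $|\lambda_0|\leq R$ for a zero $R$ of $q(x)$ exceeding all $|a_j|$, then bound $R$ by applying Theorems \ref{Thm-bounds for zeros using norms} and \ref{Thm-bounds on zeros using numerical range-2} to $q$ viewed as a scalar rational function of the form \eqref{Eqn-scalar rational function-2}. Your explicit attention to the substitution $c_i\mapsto\|C_i\|$, $b_k^{(j)}\mapsto\|B_k^{(j)}\|$, $a_j\mapsto|a_j|$ and to the distinctness of the moduli $|a_j|$ is more careful than the paper's one-line argument, but it is the same proof.
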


\begin{proof}
$\lambda_0$ being an eigenvalue of $R(\lambda)$, it follows from Theorem 
\ref{Thm-eigenvalue bounds-matrix rational function} that $|\lambda_0| \leq R$, 
where $R$ is real zero of the associated scalar real rational function $q(x)$ such 
that $|a_j| < R$ for $1 \leq j \leq n$. Now $(1)$ and $(2)$ follow from Theorem 
\ref{Thm-bounds for zeros using norms}, and $(3)$ follow from Theorem 
\ref{Thm-bounds on zeros using numerical range-2}.
\end{proof}

\subsection{A special case} 

\

We now consider a scalar complex rational function for which $p(\lambda)$ is a 
linear polynomial. That is, consider a scalar complex rational function of the 
following form
\begin{equation}\label{Eqn- rational function}
r(\lambda) = \lambda - c_0 - \displaystyle \sum_{j=1}^{n} \sum_{k=1}^{m_j} 
\frac{b_k^{(j)}}{(\lambda-a_j)^k}.
\end{equation}

We then have the following bound on the zeros of $r(\lambda)$.

\begin{theorem}\label{Thm-bounds for zeros}
Let $r(\lambda)$ be a scalar complex rational function as given in Equation 
\eqref{Eqn- rational function}. If $\lambda_0$ is a zero of $r(\lambda)$, then 
$|\lambda_0| \leq \displaystyle \max_{1\leq j \leq n} \Big\{|a_j| + 
\cos\bigg(\frac{\pi}{m_j+1}\bigg), |c_0| \Big\} + \frac{1}{2} \Bigg( 
\sqrt{\sum _{j=1}^{n}m_j} + \sqrt{\sum_{j=1}^{n} \sum_{k=1}^{m_j}|b_k^{(j)}|^{2}} \Bigg)$.
\end{theorem}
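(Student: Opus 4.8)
The plan is to view $\lambda_0$ as an eigenvalue of the associated block matrix and to bound its numerical radius after splitting that matrix into a block-diagonal piece and a purely off-diagonal piece. The function in Equation \eqref{Eqn- rational function} is exactly the case $m=1$, $c_1=1$ of Equation \eqref{Eqn-scalar rational function-2}, for which $\mathcal{B}_0=[c_0]$. By Corollary \ref{Cor-zeros of rational function} a zero $\lambda_0$ of $r(\lambda)$ is an eigenvalue of the corresponding block matrix $\mathcal{C}_r$, so by \eqref{numerical radius-relationship} it suffices to bound $w(\mathcal{C}_r)$. Partition $\mathcal{C}_r=\begin{bmatrix}A & C\\ B & \mathcal{B}_0\end{bmatrix}$ with $A=\mathrm{diag}(\mathcal{A}_n,\dots,\mathcal{A}_1)$, $C=\begin{bmatrix}-\mathcal{F}_n\\ \vdots\\ -\mathcal{F}_1\end{bmatrix}$ and $B=\begin{bmatrix}\mathcal{B}_n & \cdots & \mathcal{B}_1\end{bmatrix}$, and write
\[
\mathcal{C}_r=\begin{bmatrix}A & 0\\ 0 & \mathcal{B}_0\end{bmatrix}+\begin{bmatrix}0 & C\\ B & 0\end{bmatrix}.
\]
Subadditivity of the numerical radius then reduces the problem to estimating the two summands separately.

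For the block-diagonal summand I use the standard identity $w\!\left(\mathrm{diag}(A,\mathcal{B}_0)\right)=\max\{w(A),w(\mathcal{B}_0)\}=\max\{w(A),|c_0|\}$. Since $A$ is block diagonal across the $\mathcal{A}_j$, and each $\mathcal{A}_j$ is block diagonal across the Jordan-like blocks $A_k^{(j)}$, we have $w(A)=\max_{1\le j\le n}\max_{1\le k\le m_j}w(A_k^{(j)})$. Each $A_k^{(j)}$ carries $a_j$ on its diagonal and $1$'s on its superdiagonal; conjugating by the flip permutation (a unitary) turns it into the matrix $L_k$ of Lemma \ref{Lem-numerical radius-Jordan block} with $a=a_j$, so $w(A_k^{(j)})=w(L_k)\le |a_j|+\cos\!\big(\tfrac{\pi}{k+1}\big)$. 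As $\cos\!\big(\tfrac{\pi}{k+1}\big)$ increases with $k$, the inner maximum occurs at $k=m_j$, giving $w(A)\le \max_{1\le j\le n}\big\{|a_j|+\cos(\tfrac{\pi}{m_j+1})\big\}$ and hence $\max\{w(A),|c_0|\}\le \max_{1\le j\le n}\big\{|a_j|+\cos(\tfrac{\pi}{m_j+1}),|c_0|\big\}$.

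For the off-diagonal summand, Lemma \ref{Lem-numerical radius-2} yields $w\!\begin{bmatrix}0 & C\\ B & 0\end{bmatrix}\le \tfrac12\big(\|C\|_2+\|B\|_2\big)$. The two spectral norms are computed exactly as in the proof of Theorem \ref{Thm-bounds on zeros using numerical range-2}: $BB^*$ collapses to the single nonzero diagonal entry $\sum_{j=1}^n\sum_{k=1}^{m_j}|b_k^{(j)}|^2$, so $\|B\|_2=\sqrt{\sum_{j=1}^n\sum_{k=1}^{m_j}|b_k^{(j)}|^2}$, while $C$ is built from $\sum_{j=1}^n m_j$ unit entries, so $\|C\|_2=\sqrt{\sum_{j=1}^n m_j}$. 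Adding the two estimates gives the stated bound. I expect the only delicate step to be the identification of $A_k^{(j)}$ with the lemma's $L_k$: it relies on unitary invariance of the numerical radius together with the monotonicity of $\cos(\pi/(k+1))$ to pin down the maximizing block, after which everything else is a direct application of the cited lemmas.
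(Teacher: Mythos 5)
Your proposal is correct and follows essentially the same route as the paper: reduce to $|\lambda_0|\le w(\mathcal{C}_r)$ via Corollary \ref{Cor-zeros of rational function}, split $\mathcal{C}_r$ into the block-diagonal part $\mathrm{diag}(\mathcal{A}_n,\dots,\mathcal{A}_1,\mathcal{B}_0)$ plus the off-diagonal part, and apply subadditivity together with Lemmas \ref{Lem-numerical radius-Jordan block} and \ref{Lem-numerical radius-2}. The only difference is that you spell out the step the paper labels ``easy to see'' (the unitary flip identifying $A_k^{(j)}$ with $L_k$ and the monotonicity of $\cos(\pi/(k+1))$ in $k$), which is a welcome addition rather than a deviation.
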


\begin{proof}
We know from Corollary \ref{Cor-zeros of rational function} that $\lambda_0$ is 
an eigenvalue of $\mathcal{C}_r$. Therefore 
$|\lambda_0| \leq w(\mathcal{C}_r)$. Split $\mathcal{C}_r$ as $\mathcal{C}_r 
= \mathcal{C}_1 + \mathcal{C}_2$, where $\mathcal{C}_1 = 
\text{diag}(\mathcal{A}_n,\mathcal{A}_{n-1}, \cdots , \mathcal{A}_1, \mathcal{B}_0)$  
and $\mathcal{C}_2 = \begin{bmatrix}
0 & 0 & \cdots & 0 & -\mathcal{F}_n \\
0 & 0 & \cdots & 0 & -\mathcal{F}_{n-1} \\
\vdots & \vdots & \ddots & \vdots & \vdots \\
0 & 0 & \cdots & 0 & -\mathcal{F}_1 \\
\mathcal{B}_n & \mathcal{B}_{n-1} & \cdots & \mathcal{B}_1 & 0
\end{bmatrix}$. Then $w(\mathcal{C}_r) \leq w(\mathcal{C}_1) + 
w(\mathcal{C}_2)$. It is easy to see that $w(\mathcal{C}_1) \leq \displaystyle 
\max_{1 \leq j \leq n} \Big\{|a_j| + \cos\Big(\frac{\pi}{m_j+1}\Big), |c_0| \Big\}$. 
Let $\mathcal{C}_2 = \begin{bmatrix}
0 & 0 & \cdots & 0 & -\mathcal{F}_n \\
0 & 0 & \cdots & 0 & -\mathcal{F}_{n-1} \\
\vdots & \vdots & \ddots & \vdots & \vdots \\
0 & 0 & \cdots & 0 & -\mathcal{F}_1 \\
\mathcal{B}_n & \mathcal{B}_{n-1} & \cdots & \mathcal{B}_1 & 0
\end{bmatrix} = \begin{bmatrix}
0 & \mathcal{F} \\
\mathcal{B} & 0
\end{bmatrix}$, where $\mathcal{F} = -\begin{bmatrix}
\mathcal{F}_n \\
\mathcal{F}_{n-1} \\
\vdots \\
\mathcal{F}_1
\end{bmatrix}$ and $ \mathcal{B} = \begin{bmatrix}
\mathcal{B}_n & \mathcal{B}_{n-1} & \cdots & \mathcal{B}_1
\end{bmatrix}$. By Lemma \ref{Lem-numerical radius-2} we have, $w(\mathcal{C}_2) 
\leq \displaystyle \frac{1}{2} (||\mathcal{F}||_2+ ||\mathcal{B}||_2) = \frac{1}{2} 
\Bigg( \sqrt{\sum _{j=1}^{n}m_j} + \sqrt{\sum_{j=1}^{n} 
\sum_{k=1}^{m_j}|b_k^{(j)}|^2} \Bigg)$. Hence $|\lambda_0| \leq \displaystyle 
\max_{1 \leq j \leq n} \Big\{|a_j| + \cos\bigg(\frac{\pi}{m_j+1}\bigg), |c_0| \Big\}\\
 + \frac{1}{2} \Bigg( \sqrt{\sum _{j=1}^{n}m_j} + \sqrt{\sum_{j=1}^{n} 
\sum_{k=1}^{m_j}|b_k^{(j)}|^2} \Bigg)$. 
\end{proof}

\medskip
As a corollary we have the following result for rational matrices whose matrix 
polynomial part is linear.

\begin{corollary}\label{Cor-bound for eigenvalue-linear}
Let $R(\lambda) = I \lambda  - C_0 + \displaystyle \sum_{j=1}^{n} \sum_{k=1}^{m_j} 
\frac{B_k^{(j)}}{(\lambda-a_j)^k}$ be a $p\times p$ rational matrix. If 
$\lambda_0$ is an eigenvalue of $R(\lambda)$, then $|\lambda_0| \leq \displaystyle  
\max_{1\leq j \leq n} \Big\{|a_j| + \cos\bigg(\frac{\pi}{m_j+1}\bigg), ||C_0|| \Big\} + 
\frac{1}{2} \Bigg( \sqrt{\sum _{j=1}^{n}m_j} + \sqrt{\sum_{j=1}^{n} 
\sum_{k=1}^{m_j}||B_k^{(j)}||^{2}} \Bigg)$.
\end{corollary}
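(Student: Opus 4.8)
The plan is to derive this bound by composing two results already in hand: Theorem \ref{Thm-eigenvalue bounds-matrix rational function}, which reduces the task of bounding $|\lambda_0|$ to bounding a zero of an associated scalar real rational function, and Theorem \ref{Thm-bounds for zeros}, which supplies precisely such a bound when the polynomial part is linear. The key observation is that the given $R(\lambda) = I\lambda - C_0 + \sum_{j=1}^n\sum_{k=1}^{m_j}\frac{B_k^{(j)}}{(\lambda-a_j)^k}$ is exactly the $m=1$ instance of the general form in Equation \eqref{Eqn- rational matrix}, with leading coefficient $I$ and constant term $-C_0$. Consequently its associated scalar real rational function from Equation \eqref{Eqn-associated rational function} is $q(x) = x - ||C_0|| - \sum_{j=1}^{n}\sum_{k=1}^{m_j} \frac{||B_k^{(j)}||}{(x-|a_j|)^k}$, which is itself of the linear-part shape treated in Equation \eqref{Eqn- rational function}.

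First I would invoke Theorem \ref{Thm-eigenvalue bounds-matrix rational function}: since $\lambda_0$ is an eigenvalue of $R(\lambda)$, we obtain $|\lambda_0| \le R$, where $R$ is a real zero of $q(x)$ satisfying $|a_j| < R$ for every $j$ (its existence is guaranteed by Lemma \ref{Lem-zeros of rational function}). In particular $R > |a_n| \ge 0$, so $R = |R|$.

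Next I would regard $q(x)$ as a scalar complex rational function of the form in Equation \eqref{Eqn- rational function}, under the identification $c_0 = ||C_0||$, $b_k^{(j)} = ||B_k^{(j)}||$, and poles $|a_j|$ in place of $a_j$. Applying Theorem \ref{Thm-bounds for zeros} to the zero $R$ of $q$ then gives $R = |R| \le \max_{1\le j\le n}\{|a_j| + \cos(\frac{\pi}{m_j+1}), ||C_0||\} + \frac{1}{2}(\sqrt{\sum_{j=1}^n m_j} + \sqrt{\sum_{j=1}^n\sum_{k=1}^{m_j}||B_k^{(j)}||^2})$, using that $||C_0||$, $||B_k^{(j)}||$ and $|a_j|$ are already nonnegative reals, so passing to their moduli changes nothing. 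Combining this with $|\lambda_0| \le R$ yields the claimed inequality.

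The steps are routine and there is no genuine analytic obstacle; the work is essentially bookkeeping. The only point needing care is verifying that the hypotheses transfer: one must check that the coefficients of $q$ (which are operator norms) together with the shifted poles $|a_j|$ meet the conditions of Lemma \ref{Lem-zeros of rational function} and Theorem \ref{Thm-bounds for zeros}, and, if necessary, reindex so that $|a_1| < \cdots < |a_n|$ to make available the ``without loss of generality'' ordering those results rely on.
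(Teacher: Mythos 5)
Your proposal is correct and follows essentially the same route the paper intends: the corollary is obtained by composing Theorem \ref{Thm-eigenvalue bounds-matrix rational function} (which bounds $|\lambda_0|$ by the zero $R$ of the associated scalar function $q(x)=x-\|C_0\|-\sum_{j,k}\|B_k^{(j)}\|/(x-|a_j|)^k$) with Theorem \ref{Thm-bounds for zeros} applied to $q$. Your cautionary remarks about nonnegativity of the coefficients and reindexing the $|a_j|$ are exactly the bookkeeping the paper leaves implicit.
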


\medskip
\section{Numerical results}\label{sec-4}\hspace*{\fill}
 
In this section, we determine bounds on the moduli of eigenvalues of an 
REP and bounds on the zeros of a scalar rational function using the above proposed 
methods. We also determine bounds on zeros of certain scalar polynomials. Among 
several bounds given above for the moduli of eigenvalues of an arbitrary rational 
matrix, the bound obtained using Theorem \ref{Thm-eigenvalue bounds-matrix rational function} 
is best. This is because the bound given in Theorem 
\ref{Thm-eigenvalue bounds-matrix rational function} is a zero of $q(x)$, 
while the other estimations are given using bounds on the zeros of $q(x)$. In 
general, we cannot compare other bounds, unless a particular example is considered. 
Similarly we cannot determine which method gives a better bound for zeros of scalar 
rational functions and scalar polynomials. All the calculations are done using MATLAB. 
In Example \ref{example-4.1} we have used the spectral norm to calculate the bounds given.   

\begin{example}\label{example-4.1}
Consider the following $3 \times 3$ rational matrix, 
\begin{align*}
R(\lambda) = I \lambda^3+C_1 \lambda +C_0 +\displaystyle 
\frac{B_1}{(\lambda-1)^2} + \frac{B_2}{\lambda-2} 
\end{align*}
where $C_0 =\begin{bmatrix}
4 & 1 & 1 \\
1 & 4 & 1 \\
1 & 1 & 4
\end{bmatrix}$, $C_1 =\begin{bmatrix}
2 & 1 & 1 \\
1 & 2 & 1 \\
1 & 1 & 2
\end{bmatrix}$ and $B_1 = B_2 = I$. The polynomial part $P(\lambda)= 
I \lambda^3+C_1 \lambda +C_0$ is taken from \cite{Bani-Fuad-Rawan}. 
The maximum of the moduli of eigenvalues of $R(\lambda)$ is $2.29$. 
The bounds obtained using Theorems \ref{Thm-eigenvalue bounds-matrix rational function} 
and \ref{Thm summary} are given in Table \ref{table:1} below.

\begin{table}[h!]
\centering
\begin{tabular}{l l l l l l}
\hline
Result & \hspace{5cm} & Bound \\ [0.5ex] 
\hline

Theorem \ref{Thm-eigenvalue bounds-matrix rational function} & \hspace{5cm} & 2.64\\
Theorem \ref{Thm summary} (1) & \hspace{5cm} & 12.00\\
Theorem \ref{Thm summary} (2) & \hspace{5cm} & 9.00\\
Theorem \ref{Thm summary} (3) & \hspace{5cm} & 4.99\\[1ex] 
\hline 
\end{tabular}
\vspace{0.2cm}
\caption{Bounds obtained by Theorems \ref{Thm-eigenvalue bounds-matrix rational function} 
and \ref{Thm summary}}
\label{table:1}
\end{table} 
\end{example}

To the best of our knowledge, there are currently no findings in the 
literature concerning the determination of bounds on the moduli of eigenvalues 
of rational matrices. However, one can convert the given rational matrix into a 
matrix polynomial and use well known results in literature on matrix polynomials 
to obtain bounds on the moduli of eigenvalues. We use this approach to the 
rational matrix given in Example \ref{example-4.1} and compare our bounds with a 
bound due to Theorem $2.1$ of \cite{Bini} and bounds given in \cite{Le-Du-Nguyen}. 
Instead of stating all of the results, we provide the theorem or corollary number. 
On multiplying $(\lambda -1)^2 (\lambda-2)$ to $R(\lambda)$, 
we obtain the following matrix polynomial 
\begin{equation}
P(\lambda) = (\lambda -1)^2 (\lambda-2) R(\lambda)
= A_6\lambda^6 +A_5 \lambda^5 +A_4\lambda^4 + A_3\lambda^3 + A_2\lambda^2 +A_1\lambda+A_0,
\end{equation}
where $A_0 =-(2C_0+I)$, $A_1 =(5C_0-2C_1-I)$, $A_2 =(5C_1-4C_0+I)$, $A_3 =
(C_0-2I-4C_1)$, $A_4 =(5I+C_1)$, $A_5 =-4I$, and $A_6 =I$. 
Using item $(4)$ of Theorem $2.1$ in \cite{Bini}, the bound on the moduli of 
eigenvalues of $P(\lambda)$ is the unique positive real root of the polynomial 
$p(x) =x^6-||A_5|| x^5 -||A_4||x^4 - ||A_3||x^3 - ||A_2||x^2 -||A_1||x-||A_0||$. 
But this polynomial is the same as the one given in Theorem $3.1$ of \cite{Le-Du-Nguyen}. 
Hence the bound will be the same as the bound obtained from Theorem $3.1$ of 
\cite{Le-Du-Nguyen}.

\begin{table}[h!]
\centering
\begin{tabular}{l l l l l l}
\hline
Results from \cite{Le-Du-Nguyen} & \hspace{5cm} & Bounds\\ [0.5ex] 
\hline
Theorem $3.1$  & \hspace{5cm} & 5.91 \\
Theorem $3.2$  & \hspace{5cm} & 22.00 \\
Theorem $3.3$  & \hspace{5cm} & 22.00 \\
Theorem $3.4$  & \hspace{5cm} & 7.32 \\
Corollary $3.4.2$  & \hspace{5cm} & 9.16 \\
Corollary $3.4.4$  & \hspace{5cm} & 8.94 \\
Corollary $3.4.6$  & \hspace{5cm} & 10.59 \\[1ex]
\hline
\end{tabular}
\vspace{0.2cm}
\caption{Bounds obtained by using results from \cite{Le-Du-Nguyen}.}
\label{table:2}
\end{table} 
\noindent
Tables \ref{table:1} and \ref{table:2} suggest that for Example \ref{example-4.1} 
the bounds obtained using Theorem \ref{Thm-eigenvalue bounds-matrix rational function} 
and Theorem \ref{Thm summary} $(3)$ from our manuscript are best.

\begin{example}\label{example-4.2}
The following is a scalar rational function whose polynomial part 
is taken from \cite{Abdelkader-Faud-Soumia}
\begin{align*}
r(\lambda) = \lambda^5 + \lambda +4- \displaystyle \frac{1}{\lambda -1}+ 
\frac{2}{(\lambda -1)^2}+\frac{3}{\lambda -3}+\frac{4}{(\lambda -3)^2}-
\frac{1}{(\lambda -3)^2}.
\end{align*}
Bounds on the zeros of $r(\lambda)$ obtained using Theorems 
\ref{Thm-bounds for zeros using norms} and 
\ref{Thm-bounds on zeros using numerical range-2} are given in Table \ref{table:3} 
below. The maximum of the moduli of zeros of $r(\lambda)$ is $3.12$.

\begin{table}[h!]
\centering
\begin{tabular}{l l l l l l}
\hline
Results & \hspace{5cm} & Bounds \\ [0.5ex] 
\hline

$(1)$ of Theorem \ref{Thm-bounds for zeros using norms} & \hspace{5cm} & 16.00\\
$(2)$ of Theorem \ref{Thm-bounds for zeros using norms}  & \hspace{5cm} & 9.00\\
Theorem \ref{Thm-bounds on zeros using numerical range-2} & \hspace{5cm} & 6.96\\
$(1)$ of Corollary \ref{Cor-bounds for zeros using norms} & \hspace{5cm} & 1179.00\\
$(2)$ of Corollary \ref{Cor-bounds for zeros using norms} & \hspace{5cm} & 364.00\\
Corollary \ref{Cor-bounds on zeros using numerical range-2} & \hspace{5cm} & 266.74\\
Cauchy \cite{Horn-Johnson} & \hspace{5cm} & 364.00\\
Carmichael and Mason \cite{Horn-Johnson} & \hspace{5cm} & 520.55\\
Montel \cite{Horn-Johnson} & \hspace{5cm} & 1179.00\\
Frakis et al. \cite{Abdelkader-Faud-Soumia} & \hspace{5cm} & 277.47\\
Rouch\'e \cite{Cauchy} & \hspace{5cm} & 14.60\\
Aziz and Rather \cite{Aziz-Rather} & \hspace{5cm} & 1956.37\\[1ex] 
\hline
\end{tabular}
\vspace{0.2cm}
\caption{Bounds on the moduli of zeros of $r(\lambda)$}
\label{table:3}
\end{table} 

Note that the zeros of $r(\lambda)$ can be found by expressing it as
$\displaystyle \frac{p(\lambda)}{q(\lambda)}$, 
where $p(\lambda)$ and $q(\lambda)$ are scalar polynomials such that $\gcd(p(\lambda), 
q(\lambda))=1$. Then the zeros of $r(\lambda)$ and $p(\lambda)$ are the same. One 
can now derive bounds on zeros of $p(\lambda)$ using various techniques discussed 
in Section $3.2$ for polynomials. In practice, however, it is difficult to determine 
the coefficients of $p(\lambda)$ and the coefficients might be very large. Table 
\ref{table:3} suggests that for the above example Theorem 
\ref{Thm-bounds on zeros using numerical range-2} and $(2)$ of Theorem 
\ref{Thm-bounds for zeros using norms} give best bounds.

\end{example}

\noindent
In the following example we compare our bounds with the existing bounds 
on zeros of scalar polynomials.

\begin{example}\label{ex-polynomial-1}
Let $p_1(\lambda) = \lambda^3- 2i \lambda^2 - (1+i)\lambda-1$ and 
$p_2(\lambda) = \lambda^3 - \lambda^2 - \lambda+2$. The maximum of the moduli of 
zeros of $p_1(\lambda)$ is $1.44$ and that of $p_2(\lambda)$ is $1.29$. The bounds 
given in Corollaries \ref{Cor-bounds for zeros using norms}, 
\ref{Cor-bounds on zeros using numerical range-2} and the bounds which are mentioned 
in the paragraph that follows Corollary \ref{Cor-bounds on zeros using numerical range-2}
are presented in Table \ref{table:4}. To calculate the bound given in Inequality \eqref{eq-Aziz-Rather}, 
we have chosen $p=2=q$ and $t=1$. For $p_1$ bound due to Rouch\'e's theorem is better. 
For $p_2$ the bounds obtained using $(2)$ of Corollary \ref{Cor-bounds for zeros using norms} 
and bound due to Rouch\'e's theorem are better.  

\begin{table}[h!]
\centering
\begin{tabular}{l l l l l l}
\hline
Results & Bounds on zeros of $p_1$ & Bounds on zeros of $p_2$ \\ [0.5ex] 
\hline

$(1)$ of Corollary \ref{Cor-bounds for zeros using norms} & 4.41 & 4.00\\
$(2)$ of Corollary \ref{Cor-bounds for zeros using norms} & 3.00 & 2.00\\
Corollary \ref{Cor-bounds on zeros using numerical range-2} & 2.81 & 2.48\\
Cauchy \cite{Horn-Johnson} & 3.00 & 3.00\\
Carmichael and Mason \cite{Horn-Johnson} & 2.83 & 2.65\\
Montel \cite{Horn-Johnson} & 4.41 & 4.00\\
Frakis et al. \cite{Abdelkader-Faud-Soumia} & 2.83 & 2.84\\
Rouch\'e \cite{Cauchy} & 2.67 & 2.00\\
Aziz and Rather \cite{Aziz-Rather} & 6.00 & 8.25\\[1ex] 
\hline
\end{tabular}
\vspace{0.2cm}
\caption{Bounds on the moduli of zeros of polynomials $p_1$ and $p_2$.}
\label{table:4}
\end{table}   
\end{example} 

\newpage
\medskip
\noindent
{\bf Declarations:} The authors declare that there is no conflict of interest in 
this work. 

\noindent
{\bf Data Availability Statement:} The authors declare that there is no data used 
in this research.

\noindent
{\bf Acknowledgements:} Pallavi Basavaraju and Shrinath Hadimani are grateful for
the financial assistance provided by the Council of Scientific and Industrial 
Research (CSIR) and the University Grants Commission (UGC), both departments of the 
Indian government.

\bibliographystyle{amsplain}

\end{document}